\numberwithin{equation}{section}
\newtheorem{theorem}{Theorem}[section]
\newtheorem{lemma}{Lemma}[section]
\newtheorem{remark}{Remark}[section]
\newtheorem{proposition}{Proposition}[section]
\newtheorem{definition}{Definition}[section]
\newtheorem{notation}{Notation}[section]
\newtheorem{condition}{Condition}[section]
\begin{document}
\title [Evolution of complete noncompact graphs]{Evolution of complete noncompact graphs by powers of curvature function }
\author{Guanghan Li }

\address{\parbox[l]{1\textwidth}{School of Mathematics and Statistics,
Wuhan University, Wuhan 430072, China}}
\email{ghli@whu.edu.cn }

\author{Yusha Lv}
\address{\parbox[l]{\textwidth}{School of Mathematics and Statistics,
Wuhan University, Wuhan 430072, China}}
\email{lvyushasx@163.com }

\subjclass[2010]{53C44, 35K55} \keywords{curvature flow; complete noncompact graph; locally uniformly convex hypersurface. }


\maketitle

\begin{abstract}
This paper concerns the evolution of complete noncompact locally uniformly convex hypersurface in Euclidean space by curvature flow, for which the normal speed $\Phi$ is given by a power $\beta\geq 1$ of a monotone symmetric and  homogeneous of degree one function $F$ of the principal curvatures.
Under the assumption that $F$ is inverse concave and its dual function approaches zero on the boundary of positive cone, we prove that the complete smooth strictly convex solution exists and remains a graph until the maximal time of existence. In particular, if $F=K^{s/n}G^{1-s}$ for any $s\in(0, 1]$, where $G$ is a homogeneous of degree one, increasing in each argument and inverse concave curvature function, we prove that the complete noncompact smooth strictly convex solution exists and remains a graph  for all times.
\end{abstract}

\vskip 0.5cm
\section{Introduction}\label{sec1}

Let $\Sigma_0$ be a complete noncompact hypersurface embedded in $\mathbb R^{n+1}$ and  $X_0: M^n\rightarrow \mathbb R^{n+1}$ be a smooth immersion with $X_0(M)=\Sigma_0$. We consider a one-parameter family of smooth immersions $X: M\times[0, T)\rightarrow \mathbb R^{n+1}$ satisfying the following evolution equation
\begin{align}\label{1-1}
    \begin{cases}\frac{\partial}{\partial t}X(x,t)=-\Phi(F(\mathcal{W}(x,t)))\nu(x,t),  \\X(\cdot,0)=X_0(\cdot),\end{cases}
\end{align}
where $\nu(x,t)$ is the unit outward normal of the evolving hypersurface $\Sigma_t=X(M, t)$ at the point $X(x,t)$, $\mathcal W$ is the matrix of  Weingarten map of  $\Sigma_t$, $\Phi(F)=F^\beta~(\beta\geq 1)$ and  function $F(\mathcal W)$ satisfies the following conditions:
\begin{condition}\label{con1}
\begin{enumerate}[(i)]
\item $ F(\mathcal W)=f(\lambda(\mathcal W))$, where $\lambda(\mathcal W)$ gives the eigenvalues of $\mathcal W$ and $f$ is a smooth, symmetric function
defined on the positive cone $\Gamma_+=\{\lambda=(\lambda_1,\cdots,\lambda_n)\in \mathbb R^n: \lambda_i>0, i=1, \cdots, n\}$;

\item $f$ is strictly increasing in each argument: $\frac{\partial f}{\partial\lambda_i}>0$ on $\Gamma_+$, $\forall~i=1, \cdots, n$;

\item $f$ is homogeneous of degree one: $f(k\lambda)=k f(\lambda)$ for any $k>0$;

\item $f$ is strictly positive on $\Gamma_+$ and is normalized such that $f(1, \cdots, 1)=1$;

\item $f$ is inverse concave, that is, the function
\begin{align*}
  f_*(\lambda_1, \cdots, \lambda_n)=f(\lambda_1^{-1},\cdots,\lambda_n^{-1})^{-1}
\end{align*}
is concave;

\item $f_*$ approaches zero on the boundary of $\Gamma_+$.
\end{enumerate}
\end{condition}

 For compact convex hypersurface, problem \eqref{1-1} has been widely studied in the last decades. In \cite{H}, Huisken showed that any closed convex hypersurface evolving by the mean curvature flow contracts to a point in finite time, and become spherical in shape as the limit is approached. Later, this behavior were established for a wide range of flows where the speed is homogeneous of degree one in the principal curvatures, see \cite{A,A2,Chow,Chow1,H1,G1}. For higher homogeneity,  contracting flows and  constrained curvature flows were considered and studied in \cite{ACW,AM,AMZ,BW1,BS1,CS,GLW2,LL,LL1,S1,Tso}.

However, much less results are known when initial hypersurface is complete noncompact. In two fundamental papers \cite{EH} and \cite{EH1}, Ecker and Huisken studied the evolution of entire graph by the mean curvature. In \cite{EH}, they proved that if the initial hypersurface is a graph of locally Lipschitz continuous function and has linear growth rate for its height function, the solution exists for all times. They obtained some interior estimates in \cite{EH1} and applied them to prove that the hypothesis of linear growth in \cite{EH} is not necessary.  Later, Stavrou \cite{S2} proved the convergence to a selfsimilar profile of Lipschitz graphs having a unique cone at infinity, while Rasul \cite{R} obtained a convergence
result under a weaker oscillation condition than in \cite{EH}.

The result in \cite{EH1} can be extended to different ambient spaces. Unterberger \cite{U1,U2} proved the flow by the mean curvature of locally Lipschitz continuous entire radial graph over $\mathbb S^n_+$ in hyperbolic space $\mathbb H^{n+1}$ has a smooth
solution for all times, and each evolving hypersurface is an entire radial graph. Recently, in warped product space, Borisenko and Miquel \cite{BM} considered the flow by the mean curvature of a locally Lipschitz continuous graph on complete Riemannian manifold, and proved that the flow exists for all times and  evolving hypersurface remains a graph for all times.

The evolution of complete noncompact graphs by other special homogeneous function of degree equal to one has been considered, including $E^{1/k}_k$  \cite{H2} and  $\frac{E_k}{E_{k-1}}$ \cite{CD}, where $E_k$ is the elementary symmetric polynomial of degree $k$. In \cite{H2}, Holland derived gradient and curvature estimates for strictly $k$-convex solutions, and proved long time existence of the flow for $k$-convex initial data under assumption that initial graph function  $w_0(x)\rightarrow \infty $ as $|x|\rightarrow\infty$. Under the weak convexity assumption, Choi and Daskalopoulos  \cite{CD} proved the long time existence of complete convex solution for $\frac{E_k}{E_{k-1}}$-flow.
Recently, Alessandroni and Sinestrari \cite{AS} considered the evolution of entire convex graph by a general symmetric function $F$ of principal curvatures. If velocity $F$ is concave and inverse concave, they proved the solution exists for all times provided $F\geq \varepsilon H$ holds for some positive constant $\varepsilon$.

While for special homogeneous curvature function with higher degree, there are several results on curvature problems \eqref{1-1} for complete noncompact initial hypersurfaces. Under the assumption that initial graph is convex and satisfies a mild
condition on the oscillation of the normal, Schn\"{u}rer and Urbas \cite{SU} proved long time existence of convex graphs
evolving by powers of the Gauss curvature.  A similar result was obtained by Franzen \cite{F} for the flow by
powers of the mean curvature. Very recently, Choi, Daskalopoulos, Kim and Lee  \cite{CDKL} considered the evolution of complete noncompact locally uniformly convex hypersurface  by powers of Gauss curvature. Based on some a prior estimates for principal curvatures,
they proved that the solution of flow \eqref{1-1} exists and remains a graph for all times, without any assumption on the oscillation of the normal speed. We remark that the evolution of strictly mean convex entire graphs over $\mathbb R^n$ by inverse mean curvature flow was also considered by Daskalopoulos and Huisken in \cite{DH}, and they established the global existence of starshaped entire graphs with superlinear growth at infinity. More recently, Choi and Daskalopoulos \cite{CD1} studied the evolution of complete non-compact convex hypersurfaces in $\mathbb R^{n+1}$ by the inverse mean curvature flow. They established the long time existence of solutions and provided the characterization of the maximal time of existence in terms of the tangent cone at infinity of the initial hypersurface.

In this paper, we consider the evolution \eqref{1-1} of complete noncompact locally uniformly convex hypersurfaces by a power of curvature function satisfying Condition \ref{con1}. In order to formulate the main result of this work, it is necessary to recall some definitions as in \cite{CD,CDKL}.
\begin{definition}
We use $C^2_{\mathcal H}(\mathbb R^{n+1})$ to denote the class of second-order differentiable complete (either closed or non-compact) hypersurfaces
embedded in $\mathbb R^{n+1}$. Given any complete convex hypersurface $\Sigma$ and a point $p\in\Sigma$,  we define
the smallest principal curvature of $\Sigma$ at point $p$ by
$$\lambda_{\min}(\Sigma)(p)=\sup\left\{\lambda_{\min}(\Xi)(p): p\in\Xi\in C^2_{\mathcal H}(\mathbb R^{n+1}), \Sigma\subset {\rm the~convex~hall~of}~\Xi\right\},$$
and we say that
\begin{enumerate}[(i)]
\item $\Sigma$ is strictly convex, if $\lambda_{\min}(\Sigma)(p)>0$ holds for all $p\in \Sigma$;

\item $\Sigma$ is uniformly convex, if there is a constant $\varepsilon>0$ such that $\lambda_{\min}(\Sigma)(p)\geq\varepsilon$ for all $p\in\Sigma$;

\item $\Sigma$ is locally uniformly convex, if for any compact subset $\Omega\subset\mathbb R^{n+1}$, there is a constant $\varepsilon_{\Omega}>0$ such that $\lambda_{\min}(\Sigma)(p)\geq \varepsilon_{\Omega}$ for all $p\in\Sigma\cap\Omega$.
\end{enumerate}
\end{definition}

The first main result of this work is stated as follows:
\begin{theorem}\label{thm1}
Suppose curvature function $F$ satisfies Condition \ref{con1}. Let $\Sigma_0$ be a complete non-compact and locally uniformly convex hypersurface embedded in $\mathbb R^{n+1}$. Suppose $X_0: M^n\rightarrow \mathbb R^{n+1}$ is an immersion such that $\Sigma_0=X_0(M)$. Then for any $\beta\in[1, \infty)$, there exists a complete non-compact smooth and strictly convex solution $\Sigma_t=X(M^n, t)$ of \eqref{1-1}, which is the graph of some smooth and strictly convex function for all $t\in(0, T)$, where $T$ is the maximal time of existence of \eqref{1-1}.

In particular, if $\Sigma_0$ is an entire graph over $\mathbb R^n$, then the smooth strictly convex solution $\Sigma_t$ exists and remains a graph for all times $t\in(0, \infty)$.
\end{theorem}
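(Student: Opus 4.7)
The plan is to follow the approximation-plus-interior-estimates framework used by Choi, Daskalopoulos, Kim and Lee in \cite{CDKL} for powers of the Gauss curvature, and to adapt it to the general curvature function $F$ satisfying Condition \ref{con1}. The three pillars will be: approximation of the noncompact $\Sigma_0$ by a sequence of closed strictly convex hypersurfaces; derivation of a priori estimates that are local and independent of the approximation; and passage to the limit, together with a separate argument for long-time existence in the entire-graph case.

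First, for every large $k$ I would construct a closed smooth strictly convex hypersurface $\Sigma_0^{(k)}$ that coincides with $\Sigma_0$ inside a ball $B_{R_k}(0)$ with $R_k\to\infty$ and is capped off smoothly outside in a uniformly convex way; this is possible precisely because $\Sigma_0$ is locally uniformly convex. Standard short-time existence theory for flow \eqref{1-1} of closed strictly convex hypersurfaces (as in the references \cite{A,A2,AM}) then yields smooth strictly convex solutions $\Sigma^{(k)}_t$ on maximal intervals $[0,T_k)$.

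Second, and most delicately, I would derive interior a priori estimates on each parabolic cylinder $B_R\times[0,T']$ that are uniform in $k$. These should comprise: a local upper bound for the speed $\Phi(F)=F^\beta$, obtained by applying the maximum principle to an auxiliary quantity of the form $\Phi(F)\,\eta$ with a suitable spatial cut-off $\eta$, in the spirit of Ecker--Huisken type interior estimates; a local lower bound for the smallest principal curvature, obtained by applying the maximum principle to $f_*(\lambda)$ multiplied by a cut-off, where the inverse concavity hypothesis (v) supplies the correct sign of the reaction term in the evolution equation for $f_*$, and hypothesis (vi) that $f_*$ vanishes on $\partial\Gamma_+$ is exactly what prevents the principal curvatures from running to the boundary of the positive cone on compact subsets; and, once the equation is thereby locally uniformly parabolic with pinched eigenvalues, higher-order regularity via Krylov--Safonov and Evans--Krylov theory. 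Graphicality of $\Sigma^{(k)}_t$ is propagated by the evolution equation for $v=-\langle\nu,e_{n+1}\rangle^{-1}$; combined with the local lower curvature bound, $v$ stays bounded on compact subsets, so each $\Sigma^{(k)}_t$ remains a graph over the same horizontal region as $\Sigma_0$.

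Third, a diagonal extraction using the uniform local estimates produces a smooth strictly convex limit $\Sigma_t$ on $(0,T)$ with $T=\liminf_k T_k$, which is the graph of a strictly convex function on its domain. For the entire-graph case, the global graphicality over $\mathbb R^n$ is preserved at every $t<T$; together with the interior estimates the speed $F^\beta$ cannot blow up in finite time on any bounded horizontal set, so $T=\infty$ follows as in \cite{CDKL}. The main obstacle I expect is in step two: obtaining the local lower bound on the smallest principal curvature independent of $k$. On a closed hypersurface the maximum principle for $f_*$ is straightforward, but localizing with a spatial cut-off requires careful use of the inverse concavity inequality (in the style of Andrews--McCoy--Zheng) to absorb the unfavorable gradient error terms produced by the cut-off, and it is precisely here that the two hypotheses (v) and (vi) on $f_*$ are doing the essential work.
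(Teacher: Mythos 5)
Your overall framework (approximate $\Sigma_0$ by closed uniformly convex hypersurfaces, establish interior a priori estimates uniform in the approximation, and pass to the limit) does match the paper, but the crucial step --- the local lower bound on $\lambda_{\min}$ --- is addressed with a different and unjustified test function, and the roles of hypotheses (v) and (vi) are misattributed. The paper's Proposition~\ref{prop-2} proves the lower bound by applying the maximum principle to the Pogorelov-type quantity $\varphi^{-1}\,b^{11}/g^{11}$ at a frozen direction, using Euler's inequality (Lemma~\ref{lem-1}) to compare it to $\lambda_{\min}^{-1}$, and then estimating the second-derivative term $\ddot\Phi^{pq,rs}\nabla_1 h_{pq}\nabla_1 h_{rs}$ via the identity \eqref{3-4} expressing $\ddot f^{kl}$ in terms of $\ddot f_*^{kl}$ and the gradient inequality \eqref{2-6}. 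This is precisely where hypothesis (v) (inverse concavity) is used, and it is exactly the step that the authors flag in Remark~3.1 as requiring a different choice of test function than in the Gauss curvature case of \cite{CDKL}. Your proposal to instead run a maximum principle on $f_*(\lambda)\cdot\eta$ is not backed up: it is unclear that $f_*(\lambda(\mathcal W))$ has an evolution equation under \eqref{1-1} whose cut-off error terms close, and your claim that (v) ``supplies the correct sign of the reaction term'' for that quantity is an assertion, not a computation. Moreover, hypothesis (vi) ($f_*\to 0$ on $\partial\Gamma_+$) is not used for the lower bound at all in the paper; it is used afterwards, in Proposition~\ref{prop-4}, to convert the local \emph{upper} bound on the speed $F$ (Proposition~\ref{prop-3}) into an upper bound on all $\lambda_i$ --- since $F$ bounded above means $f_*(\lambda^{-1})=F^{-1}$ bounded below, hence $\lambda^{-1}$ stays away from $\partial\Gamma_+$, hence $\lambda_i$ is bounded above.

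Two smaller points. First, your approximation by ``capping off $\Sigma_0$ outside $B_{R_k}$'' differs from the paper's construction, which translates the graph up by $2/i$, reflects it across the $i$-level hyperplane, and then takes the $1/i$-envelope; this produces a \emph{nested increasing} family $\Gamma^i_0\preceq\Gamma^{i+1}_0\preceq\Sigma_0$, so by the comparison principle the $w^i(\cdot,t)$ are monotone decreasing in $i$, which is what makes the passage to the limit clean. A generic capping need not give this monotonicity. Second, for the entire-graph conclusion $T=\infty$, the paper does not argue via ``the speed cannot blow up''; it uses a direct barrier comparison with shrinking spheres $\partial B_{r(t)}$, $r(t)=(r^{\beta+1}-(\beta+1)t)^{1/(\beta+1)}$, to get $T\geq (\beta+1)^{-1}r^{\beta+1}$ for any inscribed $B_r\subset\Omega_0$, and an entire graph admits arbitrarily large such $r$.
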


In addition, for particular inverse concave curvature function $F=K^{s/n}G^{1-s}$~(See Remark \ref{rem1}), by constructing an appropriate barrier to guarantee each solution remains as a graph over the same domain, we have the following long time existence of solution to \eqref{1-1} for all times.
\begin{theorem}\label{thm4}
Suppose curvature function $G$ satisfies Condition \ref{con1} and $F=K^{s/n}G^{1-s}$ for any $s\in(0, 1]$. Let $\Sigma_0$ be a complete non-compact and locally uniformly convex hypersurface embedded in $\mathbb R^{n+1}$. Suppose $X_0: M^n\rightarrow \mathbb R^{n+1}$ is an immersion such that $\Sigma_0=X_0(M)$. Then for any $\beta\in[1, \infty)$, there exists a complete non-compact smooth and strictly convex solution $\Sigma_t=X(M^n, t)$ of \eqref{1-1}, which is the graph of some smooth and strictly convex function for all times $t\in(0, \infty)$.
\end{theorem}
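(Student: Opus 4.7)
The plan is to bootstrap Theorem \ref{thm1}: that result already produces a complete strictly convex graphical solution $\Sigma_t$ on some maximal existence interval $(0,T)$, so the work is to upgrade $T$ to $+\infty$ by exploiting the fact that $F=K^{s/n}G^{1-s}$ contains a strictly positive power of the Gauss curvature $K=\lambda_1\cdots\lambda_n$. This produces a fundamental rigidity: on any configuration with at least one vanishing principal curvature, $F=0$ and therefore $F^{\beta}=0$. In particular, a right circular (or, more generally, any) vertical cylinder $C=\partial\Omega\times\mathbb R$ over a smooth convex planar domain has principal curvatures $(\kappa_1,\dots,\kappa_{n-1},0)$, hence is a formal stationary solution of \eqref{1-1}. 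My plan is to turn this degenerate stationary family into a two-sided barrier that traps the flow over a fixed horizontal domain for all time.

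I would first write $\Sigma_0$ as the graph of a locally uniformly convex function $u_0$ over an open convex set $\Omega\subset\mathbb R^n$, and approximate from below by smooth strictly convex compact hypersurfaces $\Sigma_0^{(R)}$ obtained by capping off $\Sigma_0\cap B_R$ smoothly; these lie in a slightly enlarged graph region $\Omega^{(R)}\supset \Omega\cap B_R$ and converge locally smoothly to $\Sigma_0$ as $R\to\infty$. Each $\Sigma_0^{(R)}$ produces a compact $F^{\beta}$-flow $\Sigma_t^{(R)}$ on its own maximal interval $[0,T_R)$. The core step is to construct, for every relatively compact subdomain $\Omega'\Subset\Omega$ and every $\varepsilon>0$, a smooth strictly convex hypersurface $\Sigma^{\varepsilon}_{\Omega'}$ enclosing the vertical cylinder $\partial\Omega'\times\mathbb R$, with principal curvatures satisfying $\lambda_n\le \varepsilon$ uniformly (so that $F^{\beta}\le C\varepsilon^{s\beta/n}$), and with rounded top and bottom caps placed well above and below the initial data. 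This $\Sigma^{\varepsilon}_{\Omega'}$ is an honest admissible hypersurface (all $\lambda_i>0$) and a strict supersolution of \eqref{1-1} up to a speed that tends to zero with $\varepsilon$; by the avoidance principle applied to the compact flows $\Sigma_t^{(R)}$ it will then lie outside $\Sigma_t^{(R)}$ for all times for which it is defined.

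With this barrier in place I expect the following consequences. The horizontal projection of $\Sigma_t^{(R)}$ over $\Omega'$ stays strictly inside the projection of $\Sigma^{\varepsilon}_{\Omega'}$, so over $\Omega'$ the graphs $u^{(R)}(\cdot,t)$ admit uniform height bounds on every finite interval $[0,T^*]$; likewise a symmetric lower barrier (placed below $\Sigma_0$) prevents the graphs from sinking arbitrarily fast. Combining these local height bounds with the interior gradient, second fundamental form, and higher order estimates already used in the proof of Theorem \ref{thm1} (which are local in space and time and depend only on the local geometry of $\Sigma_t^{(R)}$), I obtain uniform $C^\infty_{\rm loc}(\Omega\times[0,T^*])$ bounds for $u^{(R)}$ that are independent of $R$ and of $T^*$. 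An Arzel\`a--Ascoli/diagonal extraction then produces a smooth limit $u(\cdot,t)$ on $\Omega\times[0,\infty)$ whose graph $\Sigma_t$ is a smooth strictly convex graphical solution of \eqref{1-1} agreeing with $\Sigma_0$ at $t=0$, proving $T=\infty$.

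The main obstacle, and the step where I would spend most of the care, is the barrier construction in the middle of the argument. A genuine cylinder is not admissible under Condition \ref{con1}; it violates strict convexity and sits on the boundary of $\Gamma_+$, where $F$ is only a formal zero. The challenge is to engineer a one-parameter family of strictly convex smooth hypersurfaces converging to the cylinder whose speed $F^\beta=K^{s\beta/n}G^{(1-s)\beta}$ can be made uniformly small while keeping them geometrically close to $\partial\Omega'\times\mathbb R$. The factor $K^{s/n}$ with $s>0$ is exactly what makes this possible: forcing a single principal curvature to be tiny kills the speed polynomially, even though the remaining $G^{1-s}$ factor need not be small. Matching this side behaviour smoothly with a spherical cap on top and bottom, and checking that the resulting object is a genuine supersolution (respectively subsolution) of \eqref{1-1} in the comparison sense, is the crux that distinguishes Theorem \ref{thm4} from Theorem \ref{thm1}.
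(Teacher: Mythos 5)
Your conceptual starting point — a near--cylinder barrier whose normal speed $F^\beta$ can be made arbitrarily small because the factor $K^{s/n}$ collapses when one principal curvature is small — is exactly the idea behind the paper's Theorem~\ref{thm3}, so you have correctly identified the mechanism that distinguishes Theorem~\ref{thm4} from Theorem~\ref{thm1}. The gap is in the object you propose to realize it with. A \emph{closed} strictly convex hypersurface $\Sigma^\varepsilon_{\Omega'}$ with ``rounded top and bottom caps placed well above and below the initial data'' cannot satisfy $\lambda_{\min}\le\varepsilon$ (and a fortiori $F^\beta\le C\varepsilon^{s\beta/n}$) uniformly once $\varepsilon$ is small: at the point of $\Sigma^\varepsilon_{\Omega'}$ farthest from any fixed interior point, the tangent-sphere comparison gives every $\lambda_i\ge 1/\mathrm{diam}(\Sigma^\varepsilon_{\Omega'})$, a positive lower bound depending only on where you placed the caps, so on the caps $F^\beta$ is bounded below independently of $\varepsilon$. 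The caps therefore contract at a definite rate, $\Sigma^\varepsilon_{\Omega'}$ is \emph{not} a supersolution with vanishing speed, and the avoidance argument breaks down precisely at the step you flag as ``the crux.'' Pushing the caps to $\pm\infty$ as $\varepsilon\to0$ does not repair this as stated, since the initial data is itself unbounded in height and the compact approximants $\Sigma^{(R)}_0$ grow in the vertical direction with $R$.

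The paper's resolution is to abandon the closed barrier and use an \emph{incomplete} one. For heights $h\in[l-1,l]$ it takes the rotationally symmetric band $\Psi^{\delta,l}_t=\{(x,h):|x|=\phi^{\delta,l}(h,t)\}$ with
\begin{align*}
\phi^{\delta,l}(h,t)=R_0-\delta(h-l)^2-c_0\,\delta^{s\beta/n}t,
\end{align*}
on which one genuinely has $K=O(\delta)$ and $H=O(R_0^{-1})$ (and, since $G\le\lambda_{\max}\le H$ for any monotone normalized $G$, also $F\le C\delta^{s/n}$), and verifies directly that $\partial_t\phi^{-1}\ge F^\beta v$, i.e.\ that $\Psi^{\delta,l}_t$ is a supersolution \emph{on its domain}. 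Because the barrier is not closed, avoidance cannot be applied blindly; instead one shows that a first contact with $\Sigma_t$ can only occur on the two boundary circles $L_{l-1}$ and $L_l$, rules out $L_l$ because $\Sigma_t$ is a graph, and rules out $L_{l-1}$ for $l$ large by a slope bound $|Dw|\le(2\delta)^{-1}$ at a would-be contact point combined with a compactness argument in the slab $\{|x|\le R_0,\ t\in[0,t_0]\}$. Letting $\delta\to0$ then yields $B_{R_0}(x_0)\subset\Omega_{t_0}$ for every $t_0<T$, hence $\Omega_t\equiv\Omega_0$ and $T=\infty$. It is this contact-set analysis on an open band, not a smooth capping of the cylinder, that fills the step left open in your outline.
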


\begin{remark}
The case $s=1$ of Theorem \ref{thm4} reduces to Theorem 1.1 in \cite{CDKL}. Compared with Theorem 1.1 in \cite{CDKL}, the power restriction $\beta\geq 1$
in Theorem \ref{thm4} comes from the estimation
of the local lower bound on the principal curvatures for general curvature function $F$ in Proposition \ref{prop-2}.
\end{remark}

As a byproduct of Theorem \ref{thm4}, we have the long time existence of a smooth solution $w: \Omega\times (0, \infty)\rightarrow \mathbb R$ to
the following fully nonlinear parabolic equation~(see formula \eqref{1-2} with $\Phi=K^{s\beta/n}G^{(1-s)\beta}$)
\begin{align*}
  \begin{cases}\frac{\partial w}{\partial t}=\frac{(\det D^2 w)^{s\beta/n}}{(1+|Dw|^2)^{\frac{(n+2)s\beta}{2n}-1}}G^{(1-s)\beta}(D^2 w, Dw, w, x, t),  \\ \mathop{\lim}\limits_{t\rightarrow 0}w(x, t)=w_0(x), \end{cases}
\end{align*}
where each $w(x, t)$ satisfies the conditions in Theorem \ref{thm2}~(see Section \ref{sec2}) and curvature function $G$ satisfies Condition \ref{con1}.

The rest of the paper is organized as follows. First we recall some notations, known
results and some basic evolution equations in Section \ref{sec2}. In Section \ref{sec3}, local a prior estimates for gradient function and the principal curvatures are established. We also prove the interior estimates for all derivatives of the second fundamental form by the inverse concavity of curvature function. Based on the interior estimates in previous section, Section \ref{sce4} is devoted to the proof of the existence of complete noncompact smooth solution, and the long time existence of solution for special inverse concave curvature function.

\vskip 0.5cm
\section{Notations and preliminary results}\label{sec2}

Let $X:M\rightarrow \mathbb{R}^{n+1}$ be a hypersurface of $\mathbb{R}^{n+1}$. The second fundamental form and the Weingarten map are denoted by $A=\{h_{ij}\}$ and $\mathcal W=\{g^{ik}h_{kj}\}=\{h^i_j\}$ respectively. The eigenvalues $\lambda_i, i\in\{1, \cdots, n\}$ of $\mathcal W$
are called the principal curvatures of $X(M)$ with respect to the induced metric $g=\{g_{ij}\}$. The trace of $\mathcal W$ with respect to $g$ is the mean
curvature $H$, and the Gauss curvature is
\begin{align*}
  K=\det(\mathcal W)=\det(h^i_j)=\frac{\det(h_{ij})}{\det(g_{ij})}=\prod^n_{i=1}\lambda_i.
\end{align*}
For a curvature function $F$ in Section \ref{sec1}, we shall use $\dot{F}^{kl}$ to indicate the matrix of the first order partial derivatives with respect
to the components of its argument
$$\frac{d}{ds}F(A+sB)\Big|_{s=0}=\dot{F}^{kl}\Big|_A B_{kl}.$$
Similarly the second order partial derivatives of $F$ are given by
$$\frac{d^2}{ds^2}F(A+sB)\Big|_{s=0}=\ddot{F}^{kl,rs}\Big|_A B_{kl}B_{rs}.$$
We also use the notations
$$\dot f^i(\lambda)=\frac{\partial f}{\partial\lambda_i}(\lambda)\quad\text{ and}\quad\ddot f^{ij}(\lambda)=\frac{\partial^2 f}{\partial\lambda_i\partial\lambda_j}(\lambda)$$
to denote the first and second derivatives of $f$ respect to $\lambda$.
In what follows, we will drop the arguments  when derivatives of $F$ and $f$ are evaluated at $\mathcal W$ and $\lambda(\mathcal W)$ respectively.
At any diagonal matrix $A$ with distinct eigenvalues, the second derivative $\ddot F$ in direction $B\in{\rm Sym}(n)$ can be expressed as follows (see \cite{A,A4}):
\begin{align}\label{2-8}
  \ddot F^{ij,kl}B_{ij}B_{kl}=\sum_{i,k}\ddot f^{ik}B_{ii}B_{kk}+2\sum_{i>k}\frac{\dot f^i-\dot f^k}{\lambda_i-\lambda_k}B_{ik}^2.
\end{align}
This formula makes sense as a limit in the case of any repeated values of $\lambda_i$.

The following  properties of inverse concave functions shall be needed.
\begin{lemma}[\cite{A4,AMZ}]\label{lem2}
If $f$ is inverse concave, then
 $\mathop{\sum}\limits_{i=1}^n\dot f^i\lambda_i^2\geq f^2$, and

\begin{align}
  \frac{\dot f^k-\dot f^l}{\lambda_k-\lambda_l}+\frac{\dot f^k}{\lambda_l}+\frac{\dot f^l}{\lambda_k}\geq 0,\quad\quad \forall~k\neq l. \label{2-6}
\end{align}
\end{lemma}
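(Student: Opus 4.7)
The plan is to deduce both inequalities from the concavity of $f_*$ together with its degree-one homogeneity, exploited at first and second order respectively. The bridge between $f$ and $f_*$ is the substitution $\mu_i = \lambda_i^{-1}$: differentiating the defining relation $f_*(\mu)\,f(\lambda)=1$ in $\mu_i$ yields
\begin{equation}\label{plan-fstar-deriv}
  \dot f_*^i(\mu) \;=\; \frac{\dot f^i(\lambda)\,\lambda_i^2}{f(\lambda)^2}.
\end{equation}
Both $f$ and $f_*$ are positively homogeneous of degree one, and by hypothesis $f_*$ is concave on $\Gamma_+$; these are the only properties of $f_*$ I would need.

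For the first inequality I would invoke the tangent-plane form of concavity, $f_*(\nu) \leq f_*(\mu) + \sum_i \dot f_*^i(\mu)(\nu_i - \mu_i)$, evaluated at $\nu = (1,\ldots,1)$. Because $f_*(1,\ldots,1)=1$ and Euler's identity gives $\sum_i \dot f_*^i(\mu)\,\mu_i = f_*(\mu)$, the inequality collapses to $1 \leq \sum_i \dot f_*^i(\mu)$, and substituting \eqref{plan-fstar-deriv} delivers $\sum_i \dot f^i \lambda_i^2 \geq f^2$ at once.

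For the second inequality I would apply formula \eqref{2-8} to $f_*$ at the diagonal matrix with eigenvalues $\mu_i$; concavity of $f_*$ forces the resulting quadratic form in $B$ to be non-positive. Selecting $B$ with all diagonal entries zero and only a single off-diagonal pair $B_{kl}=B_{lk}\neq 0$ kills the first sum in \eqref{2-8} and isolates the cross term, giving $(\dot f_*^k - \dot f_*^l)/(\mu_k - \mu_l) \leq 0$ for each $k\neq l$. Inserting \eqref{plan-fstar-deriv} together with $\mu_k - \mu_l = (\lambda_l - \lambda_k)/(\lambda_k\lambda_l)$ converts this into $(\dot f^k \lambda_k^2 - \dot f^l \lambda_l^2)/(\lambda_k - \lambda_l) \geq 0$, and the elementary identity
\begin{equation*}
  \dot f^k \lambda_k^2 - \dot f^l \lambda_l^2 \;=\; (\dot f^k - \dot f^l)\,\lambda_k\lambda_l \;+\; \dot f^k \lambda_k(\lambda_k - \lambda_l) \;+\; \dot f^l \lambda_l(\lambda_k - \lambda_l)
\end{equation*}
then produces \eqref{2-6} after dividing by $\lambda_k\lambda_l(\lambda_k-\lambda_l) > 0$. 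The coincident-eigenvalue case is recovered by continuity, consistent with the limit interpretation already noted for \eqref{2-8}.

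The main obstacle is not conceptual — both inequalities are really packaged inside the concavity of $f_*$ — but organisational: one must correctly identify which order of concavity produces which inequality, and then carry out the algebraic bookkeeping that turns the off-diagonal Hessian entry of $f_*$ into the specific three-term combination appearing on the left-hand side of \eqref{2-6}.
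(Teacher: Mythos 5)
The paper states this lemma without proof, citing \cite{A4,AMZ}; your derivation is correct and is precisely the standard argument found in those references: first-order (tangent-plane) concavity of $f_*$ combined with Euler's identity yields the first inequality, the off-diagonal Hessian term in \eqref{2-8} applied to $f_*$ yields the second, and the chain rule $\dot f_*^i(\mu)=\dot f^i(\lambda)\lambda_i^2/f(\lambda)^2$ translates both back to $f$. One small tidy-up: having already reached $(\dot f^k\lambda_k^2-\dot f^l\lambda_l^2)/(\lambda_k-\lambda_l)\geq 0$, substituting your identity and then dividing by $\lambda_k\lambda_l>0$ alone suffices; the extra division by $\lambda_k-\lambda_l$ you mention requires the harmless normalization $\lambda_k>\lambda_l$, which is available since \eqref{2-6} is symmetric under $k\leftrightarrow l$.
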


\begin{remark}\label{rem1}
There are many examples of inverse concave function with the dual function approaching zero on the boundary of positive cone, for example, $F=E_k^{1/k}~( k=1,\cdots,n)$, the power
means  $F=(\frac{1}{n}\mathop{\sum}\limits_{i}\lambda_i^r)^{\frac{1}{r}} ~(r>0)$, and convex function $F$. More examples can be constructed as follows:
If curvature functions $G_1$  and $G_2$ satisfy Condition \ref{con1}, then $F=G_1^{s}G_2^{1-s}$ satisfies Condition \ref{con1} for any $s\in[0, 1]$ (see \cite{A4,ALM} for more examples).
\end{remark}

In order to prove the main results, we need some extra notations as in \cite{CD,CDKL}.
\begin{notation}
\begin{enumerate}[(i)]
\item For set $\Sigma\subset\mathbb R^{n+1}$, we denote the convex hull of $\Sigma$ by
\begin{align*}
  {\rm Conv}(\Sigma)=\{\varepsilon x+(1-\varepsilon)y: x, y\in \Sigma, \varepsilon\in[0, 1]\}.
\end{align*}
\item Given a convex complete (either non-compact or closed) hypersurface $\Sigma$, if set $V$ is a
subset of ${\rm Conv}(\Sigma)$, we say $V$ is enclosed by $\Sigma$ and use the notation $V\preceq \Sigma$.
In particular, if $V\cap\Sigma=\varnothing$ and $V\preceq\Sigma$, we use $V\prec\Sigma$.\\
\item For a convex hypersurface $\Sigma$ and constant $\varepsilon>0$, we use $\Sigma^\varepsilon$ to denote its $\varepsilon$-envelope
\begin{align*}
  \Sigma^\varepsilon=\{Y\in\mathbb R^{n+1}: d(Y, \Sigma)=\varepsilon, Y\notin{\rm Conv}(\Sigma)\},
\end{align*}
where $d$ is the distance function.
\end{enumerate}
\end{notation}

For a locally uniformly convex hypersurface, we have the following theorem of Wu in \cite{W}.
\begin{theorem}[\cite{W}]\label{thm2}
Let $\Sigma$ be a complete and locally uniformly convex hypersurface embedded in $\mathbb R^{n+1}$, then there exists a function $w: \Omega\rightarrow \mathbb R$ defined on a convex open domain $\Omega\subset\mathbb R^n$ such that $\Sigma={\rm graph}~w$ and
\begin{enumerate}[(i)]
\item $w$ attains its minimum in $\Omega$ and $\mathop{\inf}\limits_{\Omega}w\geq0$;

\item if $\Omega\neq\mathbb R^n$, then $\mathop{\lim}\limits_{x\rightarrow x_0}w(x)=+\infty$ for all $x_0\in\partial\Omega$;

\item if $\Omega$ is unbounded, then $\mathop{\lim}\limits_{r\rightarrow +\infty}(\mathop{\inf}\limits_{\Omega\setminus B_r(0)}w)=\infty$.
\end{enumerate}
\end{theorem}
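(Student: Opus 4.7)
The plan is to realize $\Sigma$ as the boundary of the convex set $K:=\mathrm{Conv}(\Sigma)$, choose coordinates adapted to a well-chosen supporting hyperplane of $K$, and express $\Sigma$ as the graph of the ``lower boundary'' of $K$ over a convex base. Properties (i)--(iii) will then follow from strict convexity (a consequence of local uniform convexity) combined with the behavior of the recession cone $\mathrm{rec}(K)$.

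Local uniform convexity implies $\Sigma$ has no flat pieces, $\partial K=\Sigma$, and every supporting hyperplane of $K$ touches $\Sigma$ at a single point; non-compactness of $\Sigma$ forces $\mathrm{rec}(K)$ to be nontrivial. I first select a unit vector $\nu_0$ lying in the relative interior of $\mathrm{rec}(K)\cap S^n$; for such a $\nu_0$, a compactness argument applied to the Gauss map of $\Sigma$ yields a point $p_0\in\Sigma$ at which $\nu_0$ is the inward unit normal. After a rigid motion I assume $p_0=0$ and $\nu_0=e_{n+1}$, so that
\begin{align*}
K\subset\{x_{n+1}\geq 0\},\qquad K\cap\{x_{n+1}=0\}=\{0\},\qquad K+t\,e_{n+1}\subset K\ \text{ for all } t\geq 0.
\end{align*}

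Letting $\pi:\mathbb R^{n+1}\to\mathbb R^n$ denote vertical projection, define $\Omega:=\mathrm{int}(\pi(K))$ and
\begin{align*}
w(x'):=\inf\{x_{n+1}:(x',x_{n+1})\in K\},\qquad x'\in\Omega.
\end{align*}
Closedness of $K$ together with $e_{n+1}\in\mathrm{rec}(K)$ guarantee the infimum is attained; $w$ is convex and continuous, and for each $x'\in\Omega$ the vertical line $\{x'\}\times\mathbb R$ meets $\Sigma$ in the unique point $(x',w(x'))$, giving $\Sigma=\mathrm{graph}\,w$. Property (i) is immediate since $w(0)=0=\min_\Omega w$. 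For (ii), if $w(x_k)$ stayed bounded along $x_k\to x_0\in\partial\Omega$, an accumulation point $q=(x_0,L)\in\Sigma$ would admit a vertical supporting hyperplane; combined with $e_{n+1}\in\mathrm{rec}(K)$, this would place the entire ray $q+t\,e_{n+1}$ inside $\Sigma$, contradicting strict convexity. For (iii), if $\Omega$ is unbounded and $w$ remained bounded along $|x_k|\to\infty$, the unit vectors $(x_k,w(x_k))/|(x_k,w(x_k))|$ would subconverge to a horizontal $v\in S^n$, and convexity of $K$ would then trap the ray $\{t\,v:t\geq 0\}$ inside $K\cap\{x_{n+1}=0\}=\{0\}$, contradicting $|v|=1$.

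The main obstacle is the initial selection step: securing a direction $\nu_0$ which is simultaneously a recession direction of $K$ and the inward normal of $\Sigma$ at some point, while ensuring single-point contact for the associated supporting hyperplane. This is where the interplay between non-compactness (which makes $\mathrm{rec}(K)$ nontrivial) and local uniform convexity (which rules out flat faces) is essential, and a careful argument within the relative interior of $\mathrm{rec}(K)\cap S^n$ is required; once this geometric input is in place the rest of the proof is a routine application of convexity.
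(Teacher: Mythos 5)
The paper does not prove this theorem; it is cited to Wu \cite{W}, so there is no in-paper argument to compare against and I will assess yours on its own.

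Your overall architecture---realize $\Sigma$ as $\partial K$ with $K=\overline{\mathrm{Conv}(\Sigma)}$, rotate so that a well-chosen supporting hyperplane meets $K$ at a single point, and read $w$ off as the lower envelope of $K$---is reasonable, and the deductions of (i)--(iii) go through once the coordinate choice is secured. The error lies precisely where you flag ``the main obstacle'': you assert that every $\nu_0$ in the relative interior of $\mathrm{rec}(K)\cap S^n$ is the inward normal of $\Sigma$ at some point, but this is false. In $\mathbb{R}^2$, take $f(b)=-\epsilon\int_0^b\frac{ds}{1+e^s}$, a smooth strictly convex function with $f'$ increasing from $-\epsilon$ to $0$ and with $f(+\infty)=-\epsilon\ln 2$ finite; set $\Sigma=\{(f(b),b):b\in\mathbb{R}\}$ and $K=\{(a,b):a\geq f(b)\}$. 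Then $\Sigma$ is complete, noncompact and locally uniformly convex, $\mathrm{rec}(K)=\{(s,t):s\geq\max(0,-\epsilon t)\}$, and $\nu_0=(1,0)$ lies in $\mathrm{relint}(\mathrm{rec}(K)\cap S^1)$. Yet $\langle\nu_0,\cdot\rangle=a$ has infimum $-\epsilon\ln 2$ on $K$ which is not attained, and $f'(b)=-\epsilon/(1+e^b)$ never vanishes, so $\nu_0$ is nowhere the inward normal; moreover, in the coordinates adapted to this $\nu_0$ the graph function is $f$ itself, which has no minimum, so (i) fails.

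The correct direction must come from the image of the (inward) Gauss map of $\Sigma$---that is, a $\nu_0$ at which $\langle\nu_0,\cdot\rangle$ genuinely attains its minimum on $K$. The Gauss image always sits inside the dual cone $\mathrm{rec}(K)^*$, not inside $\mathrm{rec}(K)$ in general, and you still need $\nu_0\in\mathrm{rec}(K)$ for your fiberwise ``lower envelope'' definition of $w$ and for the ray-in-$\Sigma$ contradictions in (ii)--(iii) to go through. Establishing that the Gauss image and $\mathrm{rec}(K)$ share a direction, and choosing $\nu_0$ there, is the actual geometric content of Wu's theorem; your outline names the difficulty but proposes an incorrect resolution of it. (A minor additional point: the statement should read ``complete and noncompact''; for a closed strictly convex hypersurface such as $S^n$ the conclusion fails, and your use of a nontrivial recession cone already tacitly assumes noncompactness.)
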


Let hypersurface $\Sigma$ be a graph given by function $w: \Omega\subset\mathbb R^n\rightarrow \mathbb R$, that is,
$$\Sigma=\{(x, w(x)): x\in\Omega\}.$$
Then the induced metric $g_{ij}$ and its inverse are given by
\begin{align*}
  g_{ij}=\delta_{ij}+w_iw_j\quad\quad{\rm and}\quad\quad g^{ij}=\delta^{ij}-\frac{w^iw^j}{1+|Dw|^2},
\end{align*}
where $w_i$ is the partial derivatives of $w$.
In addition, the unit outward normal is
\begin{align}\label{2-9}
  \nu=\frac{1}{\sqrt{1+|Dw|^2}}(Dw, -1).
\end{align}
The sign of the unit outward normal is chosen such that $\Sigma$ is convex if and only if the hessian of its graph representation $w(\cdot, t)$ is nonnegative.
After a standard computation, the second fundamental form can be expressed as
\begin{align*}
  h_{ij}=\frac{w_{ij}}{\sqrt{1+|Dw|^2}},
\end{align*}
which implies
\begin{align*}
  h^i_j=\frac{w_{jk}}{\sqrt{1+|Dw|^2}}\left( \delta^{ik}-\frac{w^iw^k}{1+|Dw|^2}\right).
\end{align*}
It follows from \eqref{1-1} and \eqref{2-9} that the parabolic system \eqref{1-1} is, up to tangential diffeomorphisms, equivalent to the following equation
\begin{align}\label{1-2}
  \begin{cases}\frac{\partial w}{\partial t}=\sqrt{1+|Dw|^2}\Phi(D^2 w, Dw, w, x, t),  \\ \mathop{\lim}\limits_{t\rightarrow 0}w(x, t)=w_0(x). \end{cases}
\end{align}

To ensure that evolving hypersurface stays a graph, we have to estimate $\langle \nu, \omega\rangle$ from below for some fixed vector $\omega\in\mathbb R^{n+1}, |\omega|=1$. Let us choose $\omega=-e_{n+1}$, and define the gradient function
$$v=\langle \nu, -e_{n+1}\rangle^{-1}=\sqrt{1+|Dw|^2},$$
and the height function
$$u(x, t)=\langle X(x, t), e_{n+1}\rangle.$$

We conclude this section by showing some evolution equations for important geometric quantities.
\begin{lemma}
Let $\Sigma_t$ be a complete strictly convex graph solution of \eqref{1-1}. Then the following evolution equations hold.
\begin{align}
\partial_tg_{ij}&=-2\Phi h_{ij}, \quad\quad\quad\quad\partial_t\nu=X_{*}(\nabla  \Phi), \nonumber\\
\partial_t \Phi&=\mathcal {L}\Phi+\Phi\dot\Phi^{ij}h_{ik}h^k_j, \label{2-1}\\
\partial_th_{ij}&=\mathcal Lh_{ij}+\ddot\Phi^{kl,mn}\nabla_ih_{kl}\nabla_jh_{mn}+\dot\Phi^{kl}h_{kp}h^p_lh_{ij}-(\beta+1)\Phi h_{ik}h^k_j,\label{2-4}\\
\partial_t b^{ij}&=\mathcal L b^{ij}-2b^{ip}b^{qk}b^{lj}\dot\Phi^{rs}\nabla_rh_{pq}\nabla_sh_{kl}-b^{ik}b^{jl}\ddot\Phi^{pq,rs}\nabla_kh_{pq}\nabla_lh_{rs}\nonumber\\
&\quad-b^{ij}\dot \Phi^{kl}h_{kp}h^p_l+(\beta+1)\Phi g^{ij},\label{2-5}\\
\partial_t u&=\mathcal Lu+(1-\beta)\Phi v^{-1},\label{2-2}\\
\partial_t v&=\mathcal L v-2v^{-1}|\nabla v|^2_{\mathcal L}-v\dot\Phi^{ij}h_{ik}h^k_j,\label{2-3}
\end{align}
where $b^{ij}=h_{ij}^{-1}, \mathcal L=\dot \Phi^{kl}\nabla_k\nabla_l$ and $|\nabla v|^2_{\mathcal L}=\dot \Phi^{kl}\nabla_k v\nabla_lv$.
\end{lemma}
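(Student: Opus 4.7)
The plan is to derive each formula from $\partial_t X = -\Phi \nu$ together with the Gauss--Weingarten formulas, the Codazzi equation, and the homogeneity identity $\dot\Phi^{kl} h_{kl} = \beta\Phi$. First, commuting time and spatial derivatives gives $\partial_t X_i = -\nabla_i\Phi\cdot\nu + \Phi h_i^k X_k$ via Weingarten. The formula $\partial_t g_{ij} = -2\Phi h_{ij}$ then drops out of the product rule applied to $g_{ij} = \langle X_i, X_j\rangle$, and $\partial_t \nu = X_*(\nabla\Phi)$ follows because $\partial_t\nu$ is tangential (since $|\nu|^2=1$) and $\langle\partial_t\nu, X_j\rangle = -\langle\nu, \partial_t X_j\rangle = \nabla_j\Phi$.

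The core computation is \eqref{2-4}. Differentiating the Gauss relation $X_{ij} = \Gamma^k_{ij}X_k - h_{ij}\nu$ in $t$ and projecting onto $\nu$ yields a formula for $\partial_t h_{ij}$ in terms of $\nabla_i\nabla_j\Phi$ and $\Phi h_{ik}h^k_j$. Writing
\[\nabla_i\nabla_j\Phi = \dot\Phi^{kl}\nabla_i\nabla_j h_{kl} + \ddot\Phi^{kl,mn}\nabla_i h_{kl}\nabla_j h_{mn},\]
I would then apply the Ricci identity together with Codazzi (the full symmetry of $\nabla_i h_{jk}$) and the Gauss equation $R_{ijkl} = h_{ik}h_{jl}-h_{il}h_{jk}$ of the flat ambient to convert $\dot\Phi^{kl}\nabla_i\nabla_j h_{kl}$ into $\dot\Phi^{kl}\nabla_k\nabla_l h_{ij} = \mathcal L h_{ij}$ plus quadratic-in-$h$ correction terms. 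These corrections combine with the explicit $\Phi h_{ik}h^k_j$, after invoking $\dot\Phi^{kl}h_{kl} = \beta\Phi$, to produce precisely $\dot\Phi^{kl}h_{kp}h^p_l h_{ij} - (\beta+1)\Phi h_{ik}h^k_j$.

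The remaining identities are then substitutions. Equation \eqref{2-1} follows from the chain rule $\partial_t\Phi = \dot\Phi^i_j\partial_t h^j_i$ combined with $\partial_t g^{ij} = 2\Phi h^{ij}$ and \eqref{2-4}, after careful index bookkeeping. Equation \eqref{2-5} arises by differentiating the inverse relation $b^{ij}h_{jk}=\delta^i_k$ twice in space to relate $\mathcal L b^{ij}$ to $\mathcal L h_{kl}$ together with a $\nabla h\otimes\nabla h$ correction, then inserting \eqref{2-4} and collecting. For the height function, $\partial_t u = -\Phi\langle\nu, e_{n+1}\rangle = \Phi/v$ is immediate, and the Gauss formula gives $\nabla_i\nabla_j u = h_{ij}/v$, so $\mathcal L u = \beta\Phi/v$ and the residual is $(1-\beta)\Phi/v$. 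For \eqref{2-3}, differentiating $v = \langle\nu, -e_{n+1}\rangle^{-1}$ twice and using Weingarten $\nabla_i\nu = h_i^k X_k$ together with $\partial_t\nu = X_*(\nabla\Phi)$ yields the stated decomposition once the gradient-squared term $|\nabla v|^2_{\mathcal L}$ is recognized from the square of $\nabla_i v = v^2 h_i^k\nabla_k u$.

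The main obstacle is the Ricci commutation inside \eqref{2-4}: one must faithfully track every curvature term generated when swapping $\nabla_i\nabla_j h_{kl}$ with $\nabla_k\nabla_l h_{ij}$ via the Gauss equation, and then verify via the homogeneity of $F$ that these combine with $\Phi h_{ik}h^k_j$ to yield exactly the coefficient $-(\beta+1)$. The rest is mechanical but sign-sensitive, since the paper adopts the convention $h_{ij} = w_{ij}/\sqrt{1+|Dw|^2}$, which forces $h_{ij} = -\langle X_{ij}, \nu\rangle$ under the outward normal \eqref{2-9}; every Gauss or Weingarten application must be executed with this sign in mind.
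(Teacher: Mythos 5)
Your proposal is correct and follows essentially the same route the paper takes: the authors cite Huisken's computations for the first four identities (the chain rule plus the Simons-type commutation you describe, using Codazzi, Gauss, and Euler's relation $\dot\Phi^{kl}h_{kl}=\beta\Phi$), and then compute $b^{ij}$, $u$, and $v$ directly exactly as you outline. One sign to fix for internal consistency: with the convention $h_{ij}=-\langle X_{ij},\nu\rangle$ forced by \eqref{2-9}, Weingarten reads $\nabla_i\nu=+h_i^k X_k$, so $\partial_t X_i=-\nabla_i\Phi\,\nu-\Phi h_i^k X_k$; the $+\Phi h_i^k X_k$ you wrote would give $\partial_t g_{ij}=+2\Phi h_{ij}$, contradicting the $-2\Phi h_{ij}$ you (correctly) state immediately after.
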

\begin{proof}
The first four evolution equations under flow \eqref{1-1} follow from straightforward computations as in $\S 3$ of \cite{H}~(see also \cite{G,LL}). Now we prove the evolution equation for $b^{ij}$. The identity $b^{ik}h_{kj}=\delta^i_j$ implies
\begin{align*}
  \partial_t b^{pq}=-b^{pi}b^{qj}\partial_t h_{ij}\quad{\rm and}\quad \nabla b^{pq}=-b^{pi}b^{qj}\nabla h_{ij}.
\end{align*}
Therefore
\begin{align*}
  \nabla_r\nabla_s b^{pq}=-b^{pi}b^{qj}\nabla_r\nabla_s h_{ij}+2b^{ik}b^{pl}b^{jq}\nabla_sh_{ij}\nabla_rh_{kl},
\end{align*}
which implies
\begin{align*}
  \mathcal L b^{pq}=-b^{pi}b^{qj}\mathcal L h_{ij}+2b^{ik}b^{pl}b^{jq}\dot \Phi^{rs}\nabla_sh_{ij}\nabla_rh_{kl}.
\end{align*}
Combination of the above formulae with \eqref{2-4} gives
\begin{align*}
  \partial_tb^{pq}&=-b^{ip}b^{jq}\left( \mathcal Lh_{ij}+\ddot\Phi^{kl,mn}\nabla_ih_{kl}\nabla_jh_{mn}+\dot\Phi^{kl}h_{kr}h^r_lh_{ij}-(\beta+1)\Phi h_{ik}h^k_j\right)\\
  &=\mathcal L b^{pq}-2b^{ik}b^{pl}b^{jq}\dot \Phi^{rs}\nabla_rh_{ij}\nabla_sh_{kl}-b^{ip}b^{jq}\ddot\Phi^{kl,rs}\nabla_ih_{kl}\nabla_jh_{rs}\\
  &\quad-b^{pq}\dot\Phi^{kl}h_{kr}h^r_l+(\beta+1)\Phi g^{pq},
\end{align*}
which is equation \eqref{2-5}.

 Next, we give the proof of \eqref{2-2}. By direct computations we have
\begin{align*}
  \partial_tu=\langle\partial_t X, e_{n+1}\rangle=-\Phi\langle \nu, e_{n+1}\rangle,
\end{align*}
and
\begin{align*}
  \nabla_i\nabla_ju=\langle\bar\nabla_i\bar\nabla_jX, e_{n+1}\rangle=-h_{ij}\langle \nu, e_{n+1}\rangle.
\end{align*}
Then equation \eqref{2-2} follows from above two equations.

Last, we prove the evolution equation for gradient function $v$. From the evolution equation  for $\nu$, we have
\begin{align*}
  \partial_t v=-\partial_t\langle \nu, e_{n+1}\rangle^{-1}=v^2\langle\nabla\Phi, e_{n+1}\rangle,
\end{align*}
and
\begin{align*}
  \nabla_i\nabla_jv&=\nabla_i\left(v^2\langle \bar\nabla_j\nu, e_{n+1}\rangle\right)\\
  &=2v\nabla_iv\langle\bar\nabla_j \nu, e_{n+1}\rangle+v^2\langle\bar\nabla_i\bar\nabla_j\nu, e_{n+1}\rangle\\
  &=2v^{-1}\nabla_iv\nabla_jv+v^2\langle \nabla h_{ij}, e_{n+1}\rangle+vh_{ik}h^k_j,
\end{align*}
which implies formula \eqref{2-3}.
\end{proof}

\section{Local estimates}\label{sec3}

In this section, we will deduce interior a prior estimates for the gradient function, the principal curvatures and all the derivatives of the second fundamental form for solutions to flow \eqref{1-1}, under assumption that the initial hypersurface is smooth.

We begin by defining  cut-off functions
$$\varphi_\gamma=\left(R-u(x, t)-\gamma t\right)_+ \quad\quad{\rm and}\quad\quad \varphi=\left(R-u(x, t)\right)_+$$
 for some positive constants $R$ and $\gamma$. First, we have the following  local gradient estimate.
\begin{proposition}\label{prop-1}
Assume $\Sigma_0$ is a complete locally uniformly convex smooth hypersurface in $\mathbb R^{n+1}$, and let $\Sigma_t$ be a complete strictly convex smooth graph solution of
\eqref{1-1} defined on $M^n\times[0, T]$, for some $T>0$. Then, for some constants $\gamma>0$ and $R\geq \gamma$, we have
$$v(x, t)\varphi_\gamma(x, t)\leq R\max\left\{\mathop{\sup}\limits_{\bar \Sigma_0}v(x, 0),\quad\frac{\beta-1}{\gamma}\right\},$$
where $\bar \Sigma_0=\{x\in \Sigma_0: u(x, 0)\leq R\}$.
\end{proposition}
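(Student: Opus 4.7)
The plan is to apply the parabolic maximum principle to the multiplicative test function $W=v\varphi_\gamma$, which is tailored to the form of the claimed bound. Because each $\Sigma_t$ is a complete strictly convex graph, Theorem~\ref{thm2} makes $u(\cdot,t)$ a proper function on the graphical domain, so the support $\{\varphi_\gamma>0\}$ is relatively compact in $M^n$ for every $t\in[0,T]$ and the maximum of $W$ on $M^n\times[0,T]$ is attained.

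First I would compute $(\partial_t-\mathcal L)(v\varphi_\gamma)$ using \eqref{2-3} for $v$ together with the identity $(\partial_t-\mathcal L)\varphi_\gamma=(\beta-1)\Phi v^{-1}-\gamma$, which is a direct consequence of \eqref{2-2}. At a spatial maximum, the critical-point condition $\varphi_\gamma\nabla v+v\nabla\varphi_\gamma=0$ forces the mixed gradient term to satisfy $-2\dot\Phi^{ij}\nabla_iv\nabla_j\varphi_\gamma=\tfrac{2\varphi_\gamma}{v}|\nabla v|^2_{\mathcal L}$, which exactly cancels the bad $-\tfrac{2\varphi_\gamma}{v}|\nabla v|^2_{\mathcal L}$ contribution coming from \eqref{2-3}. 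Combined with $(\partial_t-\mathcal L)W\ge 0$ at any interior maximum, everything collapses to the clean pointwise inequality
\begin{equation*}
v\gamma+v\varphi_\gamma\,\dot\Phi^{ij}h_{ik}h^k_j\le(\beta-1)\Phi
\end{equation*}
at the maximum point.

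When $\beta=1$, the right-hand side vanishes while the left-hand side is strictly positive (since $v\ge 1$ and $\gamma>0$), so no interior maximum can occur. The preservation of $u\ge 0$ under \eqref{1-1} (which follows from $\partial_tu=\Phi/v>0$ and Theorem~\ref{thm2}(i)) gives $\varphi_\gamma\le R$ on the parabolic boundary, yielding $W\le R\sup_{\bar\Sigma_0}v(\cdot,0)$. When $\beta>1$, I would invoke the inverse-concavity bound $\sum\dot f^i\lambda_i^2\ge F^2$ from Lemma~\ref{lem2}, which together with the homogeneity identity $\dot\Phi^{ij}h_{ik}h^k_j=\beta F^{\beta-1}\dot f^i\lambda_i^2$ upgrades to $\dot\Phi^{ij}h_{ik}h^k_j\ge\beta F\Phi$. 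Dividing the pointwise inequality by $\Phi=F^\beta$ then splits it into two nonnegative summands, $v\gamma F^{-\beta}\le\beta-1$ and $\beta Fv\varphi_\gamma\le\beta-1$; eliminating $F$ between them produces $v^{(\beta+1)/\beta}\varphi_\gamma\le(\beta-1)^{(\beta+1)/\beta}/(\beta\gamma^{1/\beta})$. Raising to the $\beta/(\beta+1)$-th power and using $\varphi_\gamma\le R$ for the remaining factor of $\varphi_\gamma^{1/(\beta+1)}$, and finally using the hypothesis $R\ge\gamma$ to absorb $(R/\gamma)^{1/(\beta+1)}\le R/\gamma$, collapses this to $W\le R(\beta-1)/\gamma$ at the interior maximum. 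Combining the two cases delivers the stated bound.

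The main conceptual point is the gradient cancellation in step two: the multiplicative form $W=v\varphi_\gamma$ (rather than $v+\varphi_\gamma$ or a logarithmic variant) is forced on us by the need to absorb the sign-indefinite cross term $-2\dot\Phi^{ij}\nabla_iv\nabla_j\varphi_\gamma$ via the critical-point condition. The minor but essential algebraic point is the use of $R\ge\gamma$ at the very end, which upgrades the natural scaling $(R/\gamma)^{1/(\beta+1)}$ coming out of the max-principle inequality into the linear $R/\gamma$ factor that appears in the claim.
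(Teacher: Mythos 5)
Your proposal is correct and follows essentially the same route as the paper: apply the parabolic maximum principle to $W=v\varphi_\gamma$, compute $(\partial_t-\mathcal L)W$ from \eqref{2-2}--\eqref{2-3} with the mixed gradient term cancelling at a spatial maximum, arrive at the pointwise inequality $(\beta-1)\Phi\geq\gamma v+v\varphi_\gamma\dot\Phi^{ij}h_{ik}h^k_j$, lower-bound $\dot\Phi^{ij}h_{ik}h^k_j$ by $\beta F^{\beta+1}$ via Lemma~\ref{lem2}(i), and then close the estimate using $\varphi_\gamma\leq R$ and $R\geq\gamma$. The only cosmetic difference is that the paper combines the two surviving terms via a single weighted AM--GM (Young) inequality, whereas you split the sum into two one-sided constraints and eliminate $F$ between them; these are algebraically equivalent and yield the same bound.
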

\begin{proof}
We derive from \eqref{2-2} and the definition of $\varphi_\gamma$ that
\begin{align*}
 \partial_t\varphi_\gamma=\mathcal L\varphi_\gamma+(\beta-1)\Phi v^{-1}-\gamma.
\end{align*}
Combining this with \eqref{2-3} we obtain
\begin{align*}
  \partial_t(\varphi_\gamma v)&=\varphi_\gamma(\mathcal L v-2v^{-1}|\nabla v|^2_{\mathcal L}-v\dot\Phi^{ij}h_{ik}h^k_j)+v(\mathcal L\varphi_\gamma+(\beta-1)\Phi v^{-1}-\gamma)\\
  &=\mathcal L(\varphi_\gamma v)-2\langle \nabla\varphi_\gamma, \nabla v\rangle_{\mathcal L}-2\varphi_\gamma v^{-1}|\nabla v|^2_{\mathcal L}-v\varphi_\gamma\dot\Phi^{ij}h_{ik}h^k_j+(\beta-1)\Phi -\gamma v\\
  &=\mathcal L(\varphi_\gamma v)-2v^{-1}\left\langle \nabla v,\nabla(\varphi_\gamma v)\right\rangle_{\mathcal  L}+(\beta-1)\Phi-\varphi_\gamma v\dot\Phi^{ij}h_{ik}h^k_j-\gamma v.
\end{align*}
It follows from Theorem \ref{thm2} that the cut-off function $\varphi_\gamma$ is compactly supported. Assume the function $\varphi_\gamma v$ attains its maximum at point $(x_0, t_0)$.
If $t_0=0$, then the result follows. Now let us assume $t_0>0$. Using $\beta\geq1$, we have the following inequality by the weak parabolic maximum principle
$$(\beta-1)\Phi\geq\gamma v+\varphi_\gamma v\dot\Phi^{ij}h_{ik}h^k_j.$$
Multiplying  above inequality by $R\Phi^{-1}$ and using $R\geq \gamma$ we have
\begin{align*}
  (\beta-1)R&\geq \gamma v(RF^{-\beta}+\beta\varphi_\gamma\dot F^{ij}h_{ik}h^k_jF^{-1})\\
  &\geq \gamma v(RF^{-\beta}+\beta\varphi_\gamma F)\\
  &\geq \gamma v\varphi_\gamma\left(\frac{F^{-\beta}}{\beta+1}+\frac{\beta F}{\beta+1}\right)\geq \gamma v\varphi_\gamma,
\end{align*}
where Lemma \ref{lem2} and Young's inequality are used in the second and last inequality, respectively. Then the assertion follows.
\end{proof}

Now we will show  local lower bounds on the principal curvatures in terms of the initial data. Here, a Pogorelov type
computation, which was introduced by Sheng,
Urbas and Wang in \cite{SUW} for the elliptic setting, appeared in \cite{CD,CDKL} is used. We begin by recall the  following known Euler's formula.

\begin{lemma}[\cite{CDKL}]\label{lem-1}
Let $\Sigma$ be a smooth strictly convex hypersurface. Assume smooth
immersion $X: M\rightarrow \mathbb R^{n+1}$ satisfies $\Sigma=X(M)$. Then, for all $x\in M$ and $i\in\{1, \cdots, n\}$, the following inequality holds
$$\frac{b^{ii}(x)}{g^{ii}(x)}\leq \frac{1}{\lambda_{\min}(x)},$$
where $\{b^{ij}\}$ is the inverse matrix of the second fundamental form $\{h_{ij}\}$.
\end{lemma}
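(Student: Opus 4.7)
The plan is to reduce the estimate to a simple fact about the simultaneous diagonalization of $g^{-1}$ and $h^{-1}$ on the cotangent space $T_x^* M$. Since $\Sigma$ is smooth and strictly convex, the principal curvatures $\lambda_1,\dots,\lambda_n$ of $\Sigma$ at $x$ are all positive, so $\lambda_{\min}(x) = \min_k \lambda_k > 0$. Both $g^{-1} = \{g^{ij}\}$ and the inverse second fundamental form $b = h^{-1} = \{b^{ij}\}$ are smooth symmetric positive-definite $(2,0)$-tensors, and they are simultaneously diagonalizable by a principal orthonormal frame.

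First, I would choose an orthonormal eigenframe $\{e_1,\dots,e_n\}$ of the Weingarten map at $x$ with eigenvalues $\lambda_k$, and denote by $\{\theta^1,\dots,\theta^n\}$ its dual coframe. In this coframe the two inverse tensors take the diagonal form
\begin{align*}
  g^{-1} = \sum_k \theta^k \otimes \theta^k, \qquad h^{-1} = \sum_k \lambda_k^{-1}\, \theta^k \otimes \theta^k.
\end{align*}
Since $\lambda_k^{-1} \leq \lambda_{\min}^{-1}$ for every $k$, the pointwise tensorial inequality $h^{-1} \leq \lambda_{\min}^{-1}\, g^{-1}$ on $T_x^* M$ is immediate; explicitly, for any covector $\xi$ at $x$ with components $\xi_k$ in the principal coframe,
\begin{align*}
  h^{-1}(\xi,\xi) = \sum_k \lambda_k^{-1} \xi_k^2 \leq \lambda_{\min}^{-1} \sum_k \xi_k^2 = \lambda_{\min}^{-1}\, g^{-1}(\xi,\xi).
\end{align*}

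To conclude, I would specialize this inequality to the coordinate covector $\xi = dx^i$. Using the standard identification $T(dx^i, dx^i) = T^{ii}$ for a $(2,0)$-tensor in a coordinate basis, the left side equals $b^{ii}(x)$ and the right side equals $\lambda_{\min}^{-1} g^{ii}(x)$; dividing by $g^{ii}(x) > 0$ yields the claim. I do not anticipate any genuine obstacle here: morally, $b^{ii}/g^{ii}$ is a convex combination of the reciprocal principal curvatures with weights determined by the change-of-basis matrix between the coordinate coframe $\{dx^i\}$ and the principal coframe $\{\theta^k\}$, so it is automatically bounded above by $\lambda_{\min}^{-1}$. The only point deserving care is the coherent bookkeeping between the coordinate basis (in which $b^{ii}$ and $g^{ii}$ are read off) and the principal coframe (in which the diagonal representation is used).
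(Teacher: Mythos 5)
Your proof is correct and proceeds by the argument that Lemma~\ref{lem-1} tacitly invokes (the paper itself states it without proof, citing \cite{CDKL}): simultaneous diagonalization of $g$ and $h$ in a principal orthonormal frame gives the coordinate-free tensorial inequality $h^{-1}\leq\lambda_{\min}^{-1}\,g^{-1}$ on $T_x^*M$, and evaluating on $\xi=dx^i$ yields $b^{ii}\leq\lambda_{\min}^{-1}g^{ii}$. Your closing remark that $b^{ii}/g^{ii}$ is a convex combination of the $\lambda_k^{-1}$, weighted by the squared change-of-coframe coefficients, is exactly the ``Euler's formula'' interpretation to which the lemma's informal name alludes, so the route is the standard one. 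One small notational slip worth flagging: writing $g^{-1}=\sum_k\theta^k\otimes\theta^k$ and $h^{-1}=\sum_k\lambda_k^{-1}\theta^k\otimes\theta^k$ expresses $(2,0)$-tensors in $(0,2)$-valued notation (as written these formulas give $g$ and $h$, not their inverses); what you mean, and what you then use correctly in the displayed estimate, is that the component matrices of $g^{-1}$ and $h^{-1}$ in the coframe $\{\theta^k\}$ are $\delta^{kl}$ and $\lambda_k^{-1}\delta^{kl}$. This does not affect the substance of the argument.
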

\begin{proposition}\label{prop-2}
Assume $\Sigma_0$ is a complete locally uniformly convex smooth hypersurface in $\mathbb R^{n+1}$, and let $\Sigma_t$ be a complete strictly convex smooth graph solution of
\eqref{1-1} defined on $M^n\times[0, T]$, for some $T>0$.
Given a positive constant $R$, then for any $\sigma\in(0, 1)$, the following estimate holds
\begin{align*}
  \inf_{\{x\in\Sigma_t : u(x, t)\leq\sigma R, t\in[0, T]\}}\varphi\lambda_{\min}(x, t)\geq \inf_{\{x\in \Sigma_0: u(x, 0)\leq \sigma R\}}\varphi\lambda_{\min}(x, 0).
\end{align*}
\end{proposition}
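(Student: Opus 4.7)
My plan is a Pogorelov-type maximum-principle argument. Let $\mu^* := \inf\{\varphi\lambda_{\min} : u(x,t) \leq \sigma R,\ t \in [0,T]\}$; by Theorem~\ref{thm2} each time slice $\{u(\cdot,t)\leq\sigma R\}$ is compact and $\varphi\lambda_{\min}$ is continuous, so $\mu^*$ is attained at some $(x_0, t_0)$. If $\mu^* < M(0) := \inf_{\{u(\cdot,0)\leq\sigma R\}}\varphi\lambda_{\min}(\cdot,0)$, then $t_0 > 0$, and I would first rule out the boundary case $u(x_0,t_0)=\sigma R$: combining~\eqref{2-2} with $\nabla_i\nabla_j u = h_{ij}/v$ (from the proof of~\eqref{2-2}) and Euler's identity $\dot\Phi^{ij}h_{ij} = \beta\Phi$ yields $\partial_t u = \Phi v^{-1} > 0$ everywhere, so $u(x_0,t)<\sigma R$ for $t<t_0$ slightly, forcing $\varphi\lambda_{\min}(x_0,t) \geq M(0)$ and, by continuity at $t_0$, $\varphi\lambda_{\min}(x_0,t_0) \geq M(0)$, a contradiction. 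Hence $(x_0,t_0)$ sits in the spatial interior.

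To handle the non-smoothness of $\lambda_{\min}$ at eigenvalue collisions, I diagonalize $g$ and $h$ at $(x_0,t_0)$ with $0<\lambda_1\leq\cdots\leq\lambda_n$ and introduce the smooth test function
$$\tilde Q(x,t) := \varphi(x,t)\big/\bigl(b^{ij}(x,t)\xi_i\xi_j\bigr),$$
where $\xi$ is a smooth unit covector field equal to $e^1$ at $(x_0,t_0)$. Lemma~\ref{lem-1} yields $\tilde Q \geq \varphi\lambda_{\min}$ in a neighborhood of $(x_0,t_0)$ with equality at the point, so $\tilde Q$ also attains an interior spacetime minimum and $\partial_t\tilde Q - \mathcal L\tilde Q \leq 0$. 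Using~\eqref{2-2} (giving $\partial_t\varphi-\mathcal L\varphi = (\beta-1)\Phi v^{-1}$), the evolution~\eqref{2-5} for $b^{ij}$, and the critical-point identity $\nabla b^{11}/b^{11}=\nabla\varphi/\varphi$ (which forces $|\nabla b^{11}|^2_{\mathcal L}/(b^{11})^2 = |\nabla\varphi|^2_{\mathcal L}/\varphi^2$, canceling the cross-gradient part of $\mathcal L(\varphi/b^{11})$), one obtains
\begin{align*}
\partial_t\tilde Q - \mathcal L\tilde Q = (\beta-1)\Phi\lambda_1 v^{-1} &+ \varphi\lambda_1\dot\Phi^{kl}h_{kp}h^p_l - \varphi(\beta+1)\Phi\lambda_1^2\\
&+ \varphi\bigl[2b^{qk}\dot\Phi^{rs}\nabla_r h_{1q}\nabla_s h_{k1} + \ddot\Phi^{pq,rs}\nabla_1 h_{pq}\nabla_1 h_{rs}\bigr],
\end{align*}
and the task is to show strict positivity at $(x_0,t_0)$, contradicting the max-principle inequality.

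The Pogorelov bracket is nonnegative: expanding $\ddot F$ via Andrews's formula~\eqref{2-8} and invoking Codazzi $\nabla_r h_{1q} = \nabla_1 h_{rq}$, the off-diagonal part combines into $2\sum_{r>q}\bigl[\tfrac{\dot f^r-\dot f^q}{\lambda_r-\lambda_q} + \tfrac{\dot f^r}{\lambda_q} + \tfrac{\dot f^q}{\lambda_r}\bigr](\nabla_1 h_{rq})^2 \geq 0$ by~\eqref{2-6}, while the diagonal part $\sum\ddot f^{ik}(\nabla_1 h_{ii})(\nabla_1 h_{kk}) + 2\sum_r \dot f^r(\nabla_1 h_{rr})^2/\lambda_r \geq 0$ follows from a chain-rule expansion of the concavity of $f_*(\mu)=f(\mu^{-1})^{-1}$; the additional contribution $\beta(\beta-1)F^{\beta-2}(\dot F^{pq}\nabla_1 h_{pq})^2\geq 0$ coming from $\ddot\Phi = \beta F^{\beta-1}\ddot F + \beta(\beta-1)F^{\beta-2}\dot F\otimes\dot F$ uses $\beta\geq 1$. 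The zero-order residue is then addressed using $\dot\Phi^{kl}h_{kp}h^p_l\geq\beta F^{\beta+1}$ (first inequality of Lemma~\ref{lem2}) together with the monotonicity consequence $F\geq\lambda_{\min}$. The main obstacle is extracting \emph{strict} positivity of the residue $(\beta-1)\Phi\lambda_1/v + \varphi\lambda_1\dot\Phi^{kl}h_{kp}h^p_l - \varphi(\beta+1)\Phi\lambda_1^2$ in the borderline regime $F\sim\lambda_1$ (nearly coincident eigenvalues): one must either retain the slack in Lemma~\ref{lem2}'s first estimate or transfer mass from the nonnegative Pogorelov bracket via the critical-point identity $\nabla_1 h_{11} = -\lambda_1\nabla_1\varphi/\varphi$, and it is exactly this balance that forces the restriction $\beta\geq 1$ noted in Remark~1.1.
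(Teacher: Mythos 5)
Your overall scheme is the paper's: a Pogorelov-type test quantity built from $b^{ij}$ and Lemma~\ref{lem-1}, the evolution~\eqref{2-5}, Andrews's formula~\eqref{2-8}, and inverse concavity via~\eqref{2-6} to discard the second-derivative bracket. But two steps do not hold up.

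\textbf{The test function must carry the $g^{11}$ normalization.} The paper uses $\rho=\varphi^{-1}b^{11}/g^{11}$ (equivalently $\tilde Q=\varphi\,g^{11}/b^{11}$), not $\varphi/(b^{ij}\xi_i\xi_j)$. Beyond the fact that Lemma~\ref{lem-1} bounds the ratio $b^{ii}/g^{ii}$, the $g^{11}$ factor contributes $\partial_tg^{11}=2\Phi h^{11}$ to the reaction term, shifting the $(\beta+1)\Phi$ from~\eqref{2-5} down to $(\beta-1)\Phi$. This is exactly why your residue has the coefficient $-(\beta+1)\Phi\lambda_1^2\varphi$ and why you are left with an unresolved ``borderline regime $F\sim\lambda_1$'' that you propose to patch by transferring mass out of the Pogorelov bracket. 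With the correct $\rho$ there is no borderline case at all: at an interior spacetime maximum, after dropping the nonnegative bracket and using $\dot F^{kl}h_{kp}h^p_l\geq\lambda_1 F$ (homogeneity plus $\lambda_i\geq\lambda_1$, which gives $b^{11}\varphi^{-1}\dot\Phi^{kl}h_{kp}h^p_l\geq\beta\Phi\varphi^{-1}$), one gets
\begin{align*}
\partial_t\rho\leq\mathcal L\rho+2\varphi^{-1}\langle\nabla\varphi,\nabla\rho\rangle_{\mathcal L}+\varphi^{-2}(1-\beta)\Phi v^{-1}b^{11}-\Phi\varphi^{-1}<0,
\end{align*}
a clean contradiction at the maximum; the hypothesis $\beta\geq1$ enters only to make the $(1-\beta)$ term nonpositive, not in the zero-order balance you describe. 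The ``main obstacle'' you identify is an artifact of the missing normalization, not a genuine feature.

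\textbf{The boundary exclusion is a non sequitur.} From $u(x_0,t)<\sigma R$ for $t<t_0$ you infer $\varphi\lambda_{\min}(x_0,t)\geq M(0)$, but under your setup the global infimum over the whole slab is $\mu^*<M(0)$, so the only bound available at earlier times is $\varphi\lambda_{\min}(x_0,t)\geq\mu^*$, which is by hypothesis strictly smaller than $M(0)$. The inference would make sense only in a first-violation-in-time framework, which is not what you set up; as written, the boundary case is not excluded.
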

\begin{proof}
Since cut-off function $\varphi$ is compactly supported by Theorem \ref{thm2}, then for  fixed $T>0$, the function $\varphi^{-1}\lambda^{-1}_{\min}$ attains its maximum on
$$\{\Sigma_t: u(x, t)\leq \sigma R, t\in[0, T]\}$$
 at point $(x_0, t_0)$. If $t_0=0$, the result follows. In what follows, we assume $t_0>0$.

Choose a chart $(U, \Psi)$ with $x_0\in\Psi(U)$ such that the covariant derivatives
$\{\nabla_i X(x_0, t_0)\}_{i=1,\cdots, n}$ form an orthonormal basis of $({T\Sigma_{t_0}})_{X(x_0, t_0)}$ satisfying
\begin{align*}
  g_{ij}(x_0, t_0)=\delta_{ij},\quad\quad h_{ij}(x_0, t_0)=\delta_{ij}\lambda_i(x_0, t_0),\quad\quad\lambda_1(x_0, t_0)=\lambda_{\min}(x_0, t_0).
\end{align*}
Then at point $(x_0, t_0)$, we have
$$ b^{11}(x_0, t_0)=\lambda^{-1}_{\min}(x_0, t_0),\quad\quad g_{11}(x_0, t_0)=1.$$
 Let us define the function
  \begin{align*}
  \rho=\varphi^{-1}\frac{b^{11}}{g^{11}}.
\end{align*}
For any  point $(x, t)\in\Psi(U)\times[0, T]$, the following inequality holds by Lemma \ref{lem-1}
\begin{align*}
  \rho(x, t)\leq \varphi^{-1}\lambda^{-1}_{\min}(x, t)\leq \varphi^{-1}\lambda^{-1}_{\min}(x_0, t_0)=\rho(x_0, t_0),
\end{align*}
which implies $\rho$ attains its maximum at point $(x_0, t_0)$.
From the evolution equation \eqref{2-2}, we have
\begin{align}\label{3-5}
 \partial_t \varphi=\mathcal L\varphi +(\beta-1)\Phi v^{-1}.
\end{align}
By the definition of $\rho$ and $\nabla g^{11}=0$, we derive that
\begin{align}\label{3-6}
  \mathcal L \rho&=\dot\Phi^{ij}\nabla_i\nabla_j\left(\varphi^{-1}\frac{b^{11}}{g^{11}}\right)\nonumber\\
  &=\dot\Phi^{ij}\nabla_i\left(-\varphi^{-2}\frac{\nabla_j\varphi b^{11}}{g^{11}}+\varphi^{-1}\frac{\nabla_jb^{11}}{g^{11}}\right)\nonumber\\
  &=\dot\Phi^{ij}\left(-\varphi^{-2}\frac{ b^{11}}{g^{11}}\nabla_i\nabla_j\varphi+2\varphi^{-3}\frac{ b^{11}}{g^{11}}\nabla_j\varphi\nabla_i\varphi-2\varphi^{-2}\frac{\nabla_j\varphi \nabla_i b^{11}}{g^{11}}+\frac{\varphi^{-1}}{g^{11}}\nabla_i\nabla_j b^{11}\right)\nonumber\\
  &=-\varphi^{-2}\frac{b^{11}}{g^{11}}\mathcal{L} \varphi +\frac{\varphi^{-1}}{g^{11}}\mathcal L b^{11}-2\varphi^{-1}\langle \nabla \varphi, \nabla \rho\rangle_{\mathcal L}.
\end{align}
Then at point $(x_0, t_0)$, from equations \eqref{2-5}, \eqref{3-5} and \eqref{3-6}, it follows that
\begin{align}
 \partial_t\rho&=-\varphi^{-2}\frac{b^{11}}{g^{11}}\partial_t \varphi +\varphi^{-1}\frac{\partial_t b^{11}}{g^{11}}-\varphi^{-1}\frac{b^{11}}{(g^{11})^2}\partial_tg^{11}\nonumber\\
 &=-\varphi^{-2}\frac{b^{11}}{g^{11}}\left(\mathcal L\varphi +(\beta-1)\Phi v^{-1}\right)+\frac{\varphi^{-1}}{g^{11}}\left( -b^{11}\dot \Phi^{kl}h_{kp}h^p_l+(\beta+1)\Phi g^{11}\right)\nonumber\\
 &\quad+\frac{\varphi^{-1}}{g^{11}}\left(\mathcal L b^{11}-2b^{1p}b^{qk}b^{l1}\dot\Phi^{rs}\nabla_rh_{pq}\nabla_sh_{kl}-b^{1k}b^{1l}\ddot\Phi^{pq,rs}\nabla_kh_{pq}\nabla_lh_{rs}\right)\nonumber\\
 &\quad-2\varphi^{-1}\frac{b^{11}}{(g^{11})^2}\Phi h^{11}\nonumber\\
 &=\mathcal L\rho+2\varphi^{-1}\langle \nabla \varphi, \nabla \rho\rangle_{\mathcal L}-\varphi^{-2}(\beta-1)\Phi v^{-1}b^{11}
+(\beta-1)\Phi\varphi^{-1} -b^{11}\varphi^{-1}\dot \Phi^{kl}h_{kp}h^p_l\nonumber\\
&\quad-\varphi^{-1}(b^{11})^2\left(2b^{ij}\dot\Phi^{kl}\nabla_1h_{ik}\nabla_1h_{jl}+\ddot\Phi^{pq,rs}\nabla_1h_{pq}\nabla_1h_{rs}\right).\label{3-7}
\end{align}
Now we estimate the terms on the last line  of \eqref{3-7} by a trick that appeared in the proof of Theorem 3.2 in \cite{BW1}. To make use the inverse concavity of $f$, let $\tau_i=\frac{1}{\lambda_i}$ and $f_*(\tau)=f(\lambda)^{-1}$. We can compute that
\begin{align*}
 \dot f^k=\frac{1}{f_*^2}\frac{\partial f_*}{\partial \tau_k}\frac{1}{\lambda_k^2}=\frac{\dot f^k_*}{f^2_*}\frac{1}{\lambda_k^2},
 \end{align*}
 and
 \begin{align}
  \ddot f^{kl}&=-\frac{\ddot f^{kl}_*}{f^2_*}\frac{1}{\lambda_l^2\lambda_k^2}+2\frac{1}{f^3_*}\frac{\dot f^k_*}{\lambda_k^2}\frac{\dot f^l_*}{\lambda_l^2}-2\frac{\dot f^{k}_*}{f^2_*}\frac{1}{\lambda_k^3}\delta_{kl}\nonumber\\
  &=-\frac{\ddot f^{kl}_*}{f^2_*}\frac{1}{\lambda_l^2\lambda_k^2}+\frac{2}{f}\dot f^k\dot f^l-2\frac{\dot f^k}{\lambda_k}\delta_{kl}. \label{3-4}
\end{align}
Therefore by equations \eqref{2-8} and \eqref{3-4}, we have
\begin{align*}
&2b^{ij}\dot\Phi^{kl}\nabla_kh_{i1}\nabla_lh_{j1}+\ddot\Phi^{pq,rs}\nabla_1h_{pq}\nabla_1h_{rs}\\
&\quad=2\beta F^{\beta-1}b^{ij}\dot F^{kl}\nabla_1h_{ik}\nabla_1h_{jl}+\beta F^{\beta-1}\ddot F^{pq,rs}\nabla_1h_{pq}\nabla_1h_{rs}+\beta(\beta-1)F^{\beta-2}(\nabla_1F)^2\\
&\quad=2\beta F^{\beta-1}b^{ij}\dot F^{kl}\nabla_1h_{ik}\nabla_1h_{jl}+\beta(\beta-1)F^{\beta-2}(\nabla_1F)^2\\
  &\quad\quad+\beta F^{\beta-1}\left(\ddot f^{kl}\nabla_1h_{kk}\nabla_1h_{ll}+\sum_{k\neq l}\frac{\dot f^k-\dot f^l}{\lambda_k-\lambda_l}(\nabla_1h_{kl})^2\right)\\
&\quad=2\beta F^{\beta-1}b^{ij}\dot F^{kl}\nabla_1h_{ik}\nabla_1h_{jl}
+\beta(\beta-1)F^{\beta-2}(\nabla_1F)^2 +\beta F^{\beta-1}\sum_{k\neq l}\frac{\dot f^k-\dot f^l}{\lambda_k-\lambda_l}(\nabla_1h_{kl})^2\\
&\quad\quad+\beta F^{\beta-1}\left(-\frac{ \ddot f_*^{kl}}{f_*^2}\frac{1}{\lambda_k^2\lambda_l^2}+\frac{2}{f}\dot f^k\dot f^l-2\frac{\dot f^k}{\lambda_k}\delta_{kl}\right)\nabla_1h_{kk}\nabla_1h_{ll}\\
&\quad\geq2\beta F^{\beta-1}b^{ij}\dot F^{kl}\nabla_1h_{ik}\nabla_1h_{jl}-\beta F^{\beta+1}\ddot f_*^{kl}\frac{1}{\lambda_k^2\lambda_l^2}\nabla_1h_{kk}\nabla_1h_{ll}\\
&\quad\quad-2\beta F^{\beta-1}\frac{\dot f^k}{\lambda_k}(\nabla_1h_{kk})^2-2\beta F^{\beta-1}\sum_{k\neq l}\frac{\dot f^k}{\lambda_l}(\nabla_1h_{kl})^2+\beta(\beta+1)F^{\beta-2}(\nabla_1F)^2\\
&\quad=2\beta F^{\beta-1}b^{ij}\dot F^{kl}\nabla_1h_{ik}\nabla_1h_{jl}-\beta F^{\beta+1}\ddot f_*^{kl}\frac{1}{\lambda_k^2\lambda_l^2}\nabla_1h_{kk}\nabla_1h_{ll}\\
&\quad\quad-2\beta F^{\beta-1}\sum_{k,l}\frac{\dot f^k}{\lambda_l}(\nabla_1h_{kl})^2+\beta(\beta+1)F^{\beta-2}(\nabla_1F)^2\\
&\quad=-\beta F^{\beta+1}\ddot f_*^{kl}\frac{1}{\lambda_k^2\lambda_l^2}\nabla_1h_{kk}\nabla_1h_{ll}+\beta(\beta+1)F^{\beta-2}(\nabla_1F)^2\geq0,
\end{align*}
where inequality \eqref{2-6} is used in the first inequality. By the fact
$$\dot F^{kl}h_{kp}h^p_l\geq h_{11}F,$$
equation \eqref{3-7} can be rewritten as follows
\begin{align*}
\partial_t\rho
 &\leq \mathcal L\rho+2\varphi^{-1}\langle \nabla \varphi, \nabla \rho\rangle_{\mathcal L}-\varphi^{-2}(\beta-1)\Phi v^{-1}b^{11}
+(\beta-1)\Phi\varphi^{-1} -b^{11}\varphi^{-1}\dot \Phi^{kl}h_{kp}h^p_l\\
&\leq \mathcal L\rho+2\varphi^{-1}\langle \nabla \varphi, \nabla \rho\rangle_{\mathcal L}+\varphi^{-2}(1-\beta)\Phi v^{-1}b^{11}
-\Phi\varphi^{-1}.
\end{align*}
Thus at maximum point $(x_0, t_0)$, we have
$$0\leq\partial_t\rho\leq \varphi^{-2}(1-\beta)\Phi v^{-1}b^{11}
-\Phi\varphi^{-1}<0,$$
which is a contradiction. Hence $t_0=0$ and the result holds.
\end{proof}

\begin{remark}
  In \cite{CDKL}, Choi, Daskalopoulos, Kim and Lee obtained local lower bounds on the principal curvature of strictly convex complete smooth graph solution of $K^\beta$-flow for all $\beta>0$ by considering the compactly supported function $\varphi_\gamma^{n(1+\frac{n}{\beta})}\frac{b^{11}}{g^{11}}$. However, for general inverse-concave curvature function $F$, the term $\ddot\Phi^{pq,rs}\nabla_1h_{pq}\nabla_1h_{rs}$ appeared in \eqref{3-7} can not be  estimated accurately as in \cite{CDKL}. Here we choose another compactly supported function $\varphi^{-1}\frac{b^{11}}{g^{11}}$, and the power $\beta\geq 1$ can not be weaken to $\beta>0$.
\end{remark}

Next we will derive  local upper bounds on the principal curvatures by using the assumption that dual function $F_*$ approaches zero on the boundary of positive cone. For this purpose, the local upper bound on velocity $F$ is needed.
\begin{proposition}\label{prop-3}
Assume $\Sigma_0$ is a complete locally uniformly convex smooth hypersurface in $\mathbb R^{n+1}$, and let $\Sigma_t$ be a complete strictly convex smooth graph solution of
\eqref{1-1} defined on $M^n\times[0, T]$, for some $T>0$. Then, given a constant $R>0$,  the following holds
\begin{align*}
  \frac{t}{1+t}F\varphi^2\leq C_0\theta^{1+\frac{1}{2\beta}},
\end{align*}
where  $C_0=2^{1+\frac{1}{2\beta}}\left(2\beta\Lambda\left(1+4\beta(\theta+1)\right)+R^2+2(\beta-1)R\right)$ and $\theta, \Lambda$ are given by
 \begin{align*}
   \theta=\mathop{\sup}\limits_{\{\Sigma_t: u(x, t)\leq R, t\in[0, T]\} }v^2,\quad\quad\quad\quad \Lambda=\mathop{\sup}\limits_{\{\Sigma_t: u(x, t)\leq R, t\in[0, T]\} }\lambda_{\min}^{-1}.
 \end{align*}
\end{proposition}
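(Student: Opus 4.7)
I would apply the parabolic maximum principle to the auxiliary function
\[
W(x,t) := \eta(t)\, F(x,t)\, \varphi^2(x,t), \qquad \eta(t) := \tfrac{t}{1+t},
\]
chosen so that $W\equiv 0$ at $t=0$, which makes the claimed inequality automatic at the initial time. By Theorem~\ref{thm2} the cutoff $\varphi = (R-u)_+$ is compactly supported on $\Sigma_t$ at every time, so $W$ attains its maximum over $\{(x,t):u\leq R,\ t\in[0,T]\}$. If the maximum is attained at $t=0$ or on the set $\{\varphi=0\}$, then $W=0$ and the proposition is immediate; otherwise it is attained at an interior spacetime point $(x_0,t_0)$ with $t_0>0$, at which $\nabla W=0$, $\mathcal L W\le 0$, and $\partial_tW\ge 0$, so that $\partial_tW-\mathcal L W\ge 0$.

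The core computation is to expand this differential inequality at $(x_0,t_0)$ using the evolution equations from Section~\ref{sec2}. Applying the chain rule to $\Phi=F^\beta$ in \eqref{2-1} yields
\[
\partial_tF-\mathcal L F \;=\; \tfrac{F}{\beta}\,\dot\Phi^{ij}h_{ik}h^k_j \;+\; \tfrac{\beta-1}{F}\,|\nabla F|^2_{\mathcal L},
\]
and \eqref{2-2} gives $\partial_t\varphi-\mathcal L\varphi=(\beta-1)\Phi v^{-1}$. The first-order condition $\nabla W=0$ is equivalent to $\varphi\nabla F=-2F\nabla\varphi$, which I would use to eliminate every cross term involving $\nabla F$ in favour of a multiple of $|\nabla\varphi|^2_{\mathcal L}$; the resulting pointwise inequality involves only $F,\varphi,v,\Lambda$, and $|\nabla\varphi|^2_{\mathcal L}$.

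To close the estimate I invoke three ingredients. First, Lemma~\ref{lem2} gives $\dot f^i\lambda_i^2\ge F^2$, hence $\dot\Phi^{ij}h_{ik}h^k_j\ge\beta F^{\beta+1}$. Second, the Euler identity $\sum\dot f^i\lambda_i=F$ combined with the a~priori bound $\lambda_{\min}\ge\Lambda^{-1}$ yields the trace control $\sum\dot\Phi^{ii}\le\beta\Lambda F^\beta$. Third, the geometric identity $|\nabla\varphi|^2=|\nabla u|^2=1-v^{-2}\le 1$ gives $|\nabla\varphi|^2_{\mathcal L}\le\beta\Lambda F^\beta$, so this gradient quantity grows at most like $F^\beta$. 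Applying Young's inequality with the conjugate pair $\bigl(2\beta,\,2\beta/(2\beta-1)\bigr)$ then splits the leading term $F^{2+1/\beta}\varphi^2$ against the remaining bounded pieces of size $R^2$, $(\beta-1)R$, and multiples of $\beta\Lambda(1+\theta)$, producing the power $\theta^{1+1/(2\beta)}$ in the final bound with the constant $C_0$ exactly as stated.

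\textbf{Main obstacle.} The chief subtlety is the sign balance at the maximum: the inverse concavity bound $\dot\Phi^{ij}h_{ik}h^k_j\ge\beta F^{\beta+1}$ contributes \emph{positively} to $\partial_tW-\mathcal L W$, and so on its own offers no upper bound on $W$. The genuinely absorbing negative contribution must come from combining $\mathcal L W\le 0$ with the critical-point identity $\varphi\nabla F=-2F\nabla\varphi$, whose substitution flips the sign of the $|\nabla\varphi|^2_{\mathcal L}$ coefficients and opens the door to the Young's-inequality splitting. Verifying that this cancellation produces exactly the exponent $1+1/(2\beta)$ on $\theta$, and that the hypothesis $\beta\ge 1$ enters precisely to keep the $(\beta-1)\Phi v^{-1}$ term on the correct side of the inequality, is the technical heart of the argument.
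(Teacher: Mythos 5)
Your auxiliary function $W=\eta(t)\,F\,\varphi^2$ with $\eta(t)=t/(1+t)$ is not sufficient, and the gap is exactly in the ``main obstacle'' you flag: the critical-point identity does \emph{not} produce the absorbing negative term you need. Carry out the substitution concretely. At a maximum, $\nabla W=0$ gives $\varphi\nabla F=-2F\nabla\varphi$. Plugging this into the cross term yields
$-2\eta\langle\nabla F,\nabla\varphi^2\rangle_{\mathcal L}=-4\eta\varphi\langle\nabla F,\nabla\varphi\rangle_{\mathcal L}=+8\eta F|\nabla\varphi|^2_{\mathcal L}$,
and into the reaction term of $F$ yields $(\beta-1)\varphi^2|\nabla F|^2_{\mathcal L}/F=4(\beta-1)F|\nabla\varphi|^2_{\mathcal L}\ge0$. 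Combining with \eqref{2-1}, \eqref{2-2} one finds
\begin{align*}
\partial_t W-\mathcal L W
=\eta'F\varphi^2+\tfrac{\eta}{\beta}F\varphi^2\,\dot\Phi^{ij}h_{ik}h^k_j
+(4\beta+2)\eta F|\nabla\varphi|^2_{\mathcal L}
+2(\beta-1)\eta F\varphi\,\Phi v^{-1},
\end{align*}
where \emph{every} term is nonnegative for $\beta\ge1$. The inequality $\partial_t W-\mathcal L W\ge 0$ then holds identically, so the maximum principle gives no contradiction and no bound on $W$. Your substitution does not ``flip the sign'' of the $|\nabla\varphi|^2_{\mathcal L}$ coefficient to negative; it makes it more positive.

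The missing ingredient is a factor depending on the gradient function $v$. The paper applies a Caffarelli--Nirenberg--Spruck-type device: it sets $\eta=\eta(v^2)=v^2/(2\theta-v^2)$ and works with $\psi=\Phi^2\eta$ (and then $\psi\varphi^{4\beta}\xi^{2\beta}$ with $\xi=t/(1+t)$). The evolution \eqref{2-3} of $v$ carries the term $-2v^2\dot\Phi^{ij}h_{ik}h^k_j$, and $\eta$ is chosen so that $\eta-\eta'v^2=-\eta^2$; hence in $\partial_t(\Phi^2\eta)$ the curvature reaction becomes $2\Phi^2(\eta-\eta'v^2)\dot\Phi^{ij}h_{ik}h^k_j=-2\Phi^2\eta^2\dot\Phi^{ij}h_{ik}h^k_j$, which is genuinely negative. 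Together with Lemma~\ref{lem2}(i), this negative term dominates at the maximum and permits the Young-type splitting (with the carefully matched powers $\varphi^{4\beta}$, $\xi^{2\beta}$) that produces $C_0\theta^{1+1/(2\beta)}$. Your plan has the right outer architecture (compactly supported cutoff, zero at $t=0$, localized maximum principle) but cannot close without the $v$-dependent factor.
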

\begin{proof}
From equations \eqref{2-1} and \eqref{2-3}, we infer that
\begin{align*}
\partial_t\Phi^2&=\mathcal L \Phi^2-2|\nabla\Phi|^2_{\mathcal L}+2\Phi^2\dot\Phi^{ij}h_{ik}h^k_j,\\
  \partial_t v^2&=\mathcal L v^2-6|\nabla v|^2_{\mathcal L}-2v^2\dot\Phi^{ij}h_{ik}h^k_j.
\end{align*}
Using an idea of Caffarelli, Nirenberg and Spruck in \cite{CNS} (see also \cite{CD,CDKL,EH1}), we define function $\eta=\eta(v^2)$ by
$$\eta(v^2)=\frac{v^2}{2\theta-v^2}.$$
Then the evolution equation for $\eta$ is
\begin{align*}
  \partial_t\eta&=\eta'\partial_t v^2=\eta'(\mathcal L v^2-6|\nabla v|^2_{\mathcal L}-2v^2\dot\Phi^{ij}h_{ik}h^k_j)\\
  &=\mathcal L\eta-\eta''|\nabla v^2|^2_{\mathcal L}-6\eta'|\nabla v|^2_{\mathcal L}-2v^2\eta'\dot\Phi^{ij}h_{ik}h^k_j.
\end{align*}
Hence we have
\begin{align*}
  \partial_t(\Phi^2\eta)&=\Phi^2\left(\mathcal L\eta-\eta''|\nabla v^2|^2_{\mathcal L}-6\eta'|\nabla v|^2_{\mathcal L}-2v^2\eta'\dot\Phi^{ij}h_{ik}h^k_j\right)\\
  &\quad +\eta\left(\mathcal L \Phi^2-2|\nabla\Phi|^2_{\mathcal L}+2\Phi^2\dot\Phi^{ij}h_{ik}h^k_j\right)\\
  &=\mathcal L(\Phi^2\eta)-2\eta|\nabla \Phi|^2_{\mathcal L}-\Phi^2(4\eta''v^2+6\eta')|\nabla v|^2_{\mathcal L}\\
  &\quad+2\Phi^2(\eta-\eta'v^2)\dot\Phi^{ij}h_{ik}h^k_j-2\langle\nabla\eta, \nabla\Phi^2\rangle_{\mathcal L}.
\end{align*}
The last term can be estimated by
\begin{align*}
 -2\langle\nabla\eta, \nabla\Phi^2\rangle_{\mathcal L}&=-\langle\nabla\eta, \nabla\Phi^2\rangle_{\mathcal L}-\langle\nabla\eta, \eta^{-1}\nabla(\Phi^2\eta)\rangle_{\mathcal L}+\eta^{-1}\Phi^2|\nabla\eta|^2_{\mathcal L}\\
 &\leq-\eta^{-1}\langle\nabla\eta,\nabla(\Phi^2\eta)\rangle_{\mathcal L}+\frac{3}{2}\eta^{-1}\Phi^2|\nabla\eta|^2_{\mathcal L}+2\eta|\nabla\Phi|^2_{\mathcal L}.
\end{align*}
Thus the evolution equation for $\Phi^2\eta$ can be rewritten as
\begin{align*}
\partial_t(\Phi^2\eta)&\leq\mathcal L(\Phi^2\eta)-\eta^{-1}\langle\nabla\eta,\nabla(\Phi^2\eta)\rangle_{\mathcal L}+2\Phi^2(\eta-\eta'v^2)\dot\Phi^{ij}h_{ik}h^k_j\\
&\quad-\Phi^2\left(4\eta''v^2+6\eta'-6\eta^{-1}(\eta')^2v^2\right)|\nabla v|^2_{\mathcal L}.
\end{align*}
From the expression of $\eta$, we have
\begin{align*}
  \eta-\eta'v^2=-\eta^2, \quad\quad\quad\eta^{-1}\nabla\eta=4\theta\eta v^{-3}\nabla v,
\end{align*}
and
\begin{align*}
  4\eta''v^2+6\eta'-6\eta^{-1}(\eta')^2v^2=\frac{4\theta\eta}{(2\theta-v^2)^2}.
\end{align*}
Substituting these identities into the evolution equation for $\psi=\Phi^2\eta$ implies
\begin{align*}
  \partial_t \psi\leq \mathcal L\psi-4\theta\eta v^{-3}\langle\nabla v,\nabla \psi\rangle_{\mathcal L}-\frac{4\theta \psi}{(2\theta-v^2)^2}|\nabla v|^2_{\mathcal L}-2\eta\psi\dot\Phi^{ij}h_{ik}h^k_j.
\end{align*}
It follows from \eqref{3-5} that
\begin{align*}
  \partial_t\varphi^{4\beta}=\mathcal L\varphi^{4\beta}-4\beta(4\beta-1)\varphi^{4\beta-2}|\nabla\varphi|^2_{\mathcal L}+4\beta(\beta-1)\varphi^{4\beta-1}\Phi v^{-1}.
\end{align*}
Hence the following inequality holds
\begin{align*}
  \partial_t(\psi\varphi^{4\beta})&\leq\mathcal L(\psi\varphi^{4\beta})-4\theta\eta v^{-3}\varphi^{4\beta}\langle\nabla v,\nabla \psi\rangle_{\mathcal L}-\frac{4\theta\psi\varphi^{4\beta}}{(2\theta-v^2)^2}|\nabla v|^2_{\mathcal L}\\
  &\quad-2\eta \psi\varphi^{4\beta}\dot\Phi^{ij}h_{ik}h^k_j-4\beta(4\beta-1)\psi\varphi^{4\beta-2}|\nabla\varphi|^2_{\mathcal L}\\
  &\quad+4\beta(\beta-1)\varphi^{4\beta-1}\Phi v^{-1}\psi-2\langle\nabla \psi, \nabla \varphi^{4\beta}\rangle_{\mathcal L}.
\end{align*}
The last term can be rewritten as
\begin{align*}
  -2\langle\nabla \psi, \nabla \varphi^{4\beta}\rangle_{\mathcal L}=-2\varphi^{-4\beta}\langle\nabla\varphi^{4\beta}, \nabla(\psi\varphi^{4\beta})\rangle_{\mathcal L}+32\beta^2\varphi^{4\beta-2}\psi|\nabla\varphi|^2_{\mathcal L}.
\end{align*}
We also estimate
\begin{align*}
  &-4\theta\eta v^{-3}\varphi^{4\beta}\langle\nabla v,\nabla \psi\rangle_{\mathcal L}\\
  &\quad=-4\theta\eta v^{-3}\langle\nabla v,\nabla (\psi\varphi^{4\beta})\rangle_{\mathcal L}
+16\beta\theta\eta v^{-3}\psi\varphi^{4\beta-1}\langle\nabla v,\nabla \varphi\rangle_{\mathcal L}\\
  &\quad\leq-4\theta\eta v^{-3}\langle\nabla v,\nabla (\psi\varphi^{4\beta})\rangle_{\mathcal L} +\frac{4\theta \psi\varphi^{4\beta}}{(2\theta-v^2)^2}|\nabla v|^2_{\mathcal L}\\
  &\quad\quad+16\beta^2(2\theta-v^2)^2\theta \psi v^{-6}\eta^2\varphi^{4\beta-2}|\nabla\varphi|^2_{\mathcal L}\\
  &\quad=-4\theta\eta v^{-3}\langle\nabla v,\nabla (\psi\varphi^{4\beta})\rangle_{\mathcal L}+\frac{4\theta \psi\varphi^{4\beta}}{(2\theta-v^2)^2}|\nabla v|^2_{\mathcal L}\\
  &\quad\quad+16\beta^2\theta \psi v^{-2}\varphi^{4\beta-2}|\nabla\varphi|^2_{\mathcal L}.
\end{align*}
Combining above inequalities gives
\begin{align*}
  \partial_t(\psi\varphi^{4\beta})
  &\leq\mathcal L(\psi\varphi^{2\beta})-\langle4\theta\eta v^{-3}\nabla v+2\varphi^{-4\beta}\nabla\varphi^{4\beta},\nabla (\psi\varphi^{4\beta})\rangle_{\mathcal L}\\
  &\quad+\left(32\beta^2+16\beta^2\theta v^{-2}-4\beta(4\beta-1)\right)\psi\varphi^{4\beta-2}|\nabla\varphi|^2_{\mathcal L}\\
  &\quad+4\beta(\beta-1)\varphi^{4\beta-1}\Phi v^{-1}\psi-2\eta \psi\varphi^{4\beta}\dot\Phi^{ij}h_{ik}h^k_j.
\end{align*}
On the other hand, by
$$\nabla_i\varphi=-\nabla_i u=-\langle\bar\nabla_i X, e_{n+1}\rangle,$$
 we have
\begin{align*}
  |\nabla\varphi|^2_{\mathcal L}&=\beta F^{\beta-1}\dot F^{ij}\nabla_i\varphi\nabla_j\varphi\\
  &=\beta F^{\beta-1}\dot F^{ij}\langle\bar\nabla_iX, e_{n+1}\rangle\langle\bar\nabla_jX, e_{n+1}\rangle\\
  &\leq \sum_{p=1}^{n+1}\beta F^{\beta-1}\dot F^{ij}\langle\bar\nabla_iX, e_{p}\rangle\langle\bar\nabla_jX, e_p\rangle\\
  &=\beta F^{\beta-1}\dot F^{ij}g_{ij}\leq\frac{ \beta}{\lambda_{\min}}F^\beta\leq \beta \Lambda F^\beta,
\end{align*}
which implies that, by $v\geq 1$
\begin{align*}
  \left(32\beta^2+16\beta^2\theta v^{-2}-4\beta(4\beta-1)\right)\psi\varphi^{4\beta-2}|\nabla\varphi|^2_{\mathcal L}
  \leq 4\Lambda\beta^2(1+4\beta(\theta+1))\psi\varphi^{4\beta-2}F^{\beta}.
\end{align*}
Thus the evolution equation for $\psi\varphi^{4\beta}$ can be rewritten as
\begin{align*}
  \partial_t(\psi\varphi^{4\beta})&\leq\mathcal L(\psi\varphi^{4\beta})-\left\langle4\theta\eta v^{-3}\nabla v+2\varphi^{4\beta}\nabla\varphi^{4\beta},\nabla (\psi\varphi^{4\beta})\right\rangle_{\mathcal L}\\
  &\quad+4\Lambda\beta^2\left(1+4\beta(\theta+1)\right)\psi\varphi^{4\beta-2}F^{\beta}
  -2\eta \psi\varphi^{4\beta}\dot\Phi^{ij}h_{ik}h^k_j\\
  &\quad+4\beta(\beta-1)\varphi^{4\beta-1}\Phi v^{-1}\psi.
\end{align*}
Let $\xi=\frac{t}{1+t}$, then we have, by $\partial_t\xi=\frac{1}{(1+t)^2}\leq 1$,
\begin{align*}
  \partial_t(\psi\varphi^{4\beta}\xi^{2\beta})&\leq\mathcal L(\psi\varphi^{4\beta}\xi^{2\beta})-\langle4\theta\eta v^{-3}\nabla v+2\varphi^{4\beta}\nabla\varphi^{4\beta},\nabla (\psi\varphi^{4\beta}\xi^{2\beta})\rangle_{\mathcal L}\\
  &\quad-2\eta\psi\varphi^{4\beta}\xi^{2\beta}\dot\Phi^{ij}h_{ik}h^k_j+4\Lambda\beta^2\left(1+4\beta(\theta+1)\right)\psi\varphi^{4\beta-2}F^{\beta}\xi^{2\beta}\\
  &\quad+4\beta(\beta-1)\varphi^{4\beta-1}\Phi v^{-1}\psi\xi^{2\beta}+2\beta\xi^{2\beta-1}\psi\varphi^{4\beta}.
\end{align*}
 Since cut-off function $\varphi$ is compactly supported and $\xi=0$ when $t=0$, function $\psi\varphi^{4\beta}\xi^{2\beta}$ attains its maximum at some point $(x_0, t_0)$ for $t_0>0$. Thus the weak parabolic maximum principle implies, by the fact $\eta\geq\frac{1}{2\theta}$ and Lemma \ref{lem2} (i),
\begin{align}\label{3-1}
\frac{\beta}{\theta}\psi\varphi^{4\beta}\xi^{2\beta}F^{\beta+1}&\leq 2\eta\psi\varphi^{4\beta}\xi^{2\beta}\dot\Phi^{ij}h_{ik}h^k_j\nonumber\\
&\leq4\Lambda\beta^2\left(1+4\beta(\theta+1)\right)\psi\varphi^{4\beta-2}F^{\beta}\xi^{2\beta}
+2\beta\xi^{2\beta-1}\psi\varphi^{4\beta}\nonumber\\
&\quad+4\beta(\beta-1)\varphi^{4\beta-1}\Phi v^{-1}\psi\xi^{2\beta}.
\end{align}
Multiplying by $\frac{\theta}{\beta} \varphi^{-4\beta+2}\psi^{-1}\xi^{-2\beta+1}F^{-\beta}$ and noticing that $v\geq1, \xi\leq 1$ and $\varphi\leq R$, we have
\begin{align*}
  \varphi^2\xi F&\leq 4\beta\Lambda(1+4\beta(\theta+1))\theta\xi+2\theta\varphi^2 F^{-\beta}+4(\beta-1)\theta\varphi\xi v^{-1}\\
&\leq 4\beta\Lambda\left(1+4\beta(\theta+1)\right)\theta+2\theta\varphi^2 F^{-\beta}+4(\beta-1)\theta R\\
&= 4\beta\Lambda\left(1+4\beta(\theta+1)\right)\theta+2\theta(\varphi^2\xi F)^{-\beta}\xi^{\beta}\varphi^{2\beta+2}+4(\beta-1)\theta R\\
&\leq 4\beta\Lambda\left(1+4\beta(\theta+1)\right)\theta+2\theta R^2(\varphi^2\xi F)^{-\beta}R^{2\beta}+4(\beta-1)\theta R.
\end{align*}
If $\varphi^2\xi F\geq R^2$, above inequality shows
\begin{align*}
  \varphi^2\xi F\leq4\beta\Lambda\left(1+4\beta(\theta+1)\right)\theta+2\theta R^2+4(\beta-1)\theta R.
\end{align*}
Otherwise we can obtain, by $\theta\geq 1$,
$$\varphi^2\xi F\leq R^2\leq 4\beta\Lambda\left(1+4\beta(\theta+1)\right)\theta+2\theta R^2+4(\beta-1)\theta R.$$
Thus at point $(x_0, t_0)$, the following inequality holds
\begin{align*}
  \varphi^2\xi F\leq4\beta\Lambda\left(1+4\beta(\theta+1)\right)\theta+2\theta R^2+4(\beta-1)\theta R.
\end{align*}
Using that $\frac{1}{2\theta}\leq\eta\leq 1$ and $\psi\varphi^{4\beta}\xi^{2\beta}$ attains its maximum at point $(x_0, t_0)$, we conclude that, for all $(x, t)\in M\times[0, T)$, the following holds
\begin{align*}
  \frac{1}{2\theta}(F\varphi^2\xi)^{2\beta}&\leq \psi \varphi^{4\beta}\xi^{2\beta}=F^{2\beta}\eta\varphi^{4\beta}\xi^{2\beta}\leq (F\varphi^2\xi)^{2\beta}\\
&\leq \left (2\beta\Lambda\left(1+4\beta(\theta+1)\right)+R^2+2(\beta-1) R\right)^{2\beta}(2\theta)^{2\beta}.
\end{align*}
From this, the assertion follows.
\end{proof}

To establish the existence of smooth complete noncompact solution of flow \eqref{1-1}, the local estimates for all the derivatives of the second fundamental form are needed. Here we apply the Gauss map parametrization of convex hypersurface, which has been used widely in \cite{AMZ,BW1}, and write the flow \eqref{1-1} as a parabolic equation of the support function which is concave with respect to its arguments.
\begin{proposition}\label{prop-4}
Assume $\Sigma_0$ is a complete locally uniformly convex smooth hypersurface in $\mathbb R^{n+1}$, and let $\Sigma_t$ be a complete strictly convex smooth graph solution of
\eqref{1-1} defined on $M^n\times[0, T]$, for some $T>0$. Given a constant $R>0$, for any $k\geq 0$ and $\sigma\in(0, 1)$, we have
\begin{align*}
  \sup_{\{\Sigma_t: u(x, t)\leq \sigma R, t\in[0, T]\}}||\nabla^k A|| \leq C(n,~k,~R,~\sigma,~\beta,~\sup_{\bar\Sigma_0}v,~\sup_{\bar\Sigma_0}F,~\inf_{\bar\Sigma_0}\lambda_{\min}),
\end{align*}
where $\bar\Sigma_0=\{\Sigma_0:u(x, 0)\leq R\}$.
\end{proposition}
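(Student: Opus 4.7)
The plan is to reduce the flow \eqref{1-1} to a concave, uniformly parabolic scalar equation via the Gauss map parametrization, and then combine the interior parabolic Evans–Krylov theorem with a Schauder bootstrap.

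As a first step, I would use the three preceding propositions to pinch all principal curvatures on a slightly larger set than $\{u \leq \sigma R\}$. Proposition \ref{prop-1} gives $v \leq C$, Proposition \ref{prop-2} gives $\lambda_{\min} \geq c > 0$, and Proposition \ref{prop-3} together with the initial-slice bound $\sup_{\bar\Sigma_0} F$ gives $F \leq C$ on the whole interval $[0, T]$. Condition \ref{con1}(vi) then upgrades these into a uniform upper bound on $\lambda_{\max}$: if some $\lambda_i \to \infty$ while $\lambda_{\min}$ stays bounded below, the vector $\lambda^{-1}$ would approach $\partial\Gamma_+$, forcing $F^{-1}=f_*(\lambda^{-1}) \to 0$, which contradicts the upper bound on $F$. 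Hence on an intermediate set $\{u \leq \sigma' R\}$ with $\sigma < \sigma' < 1$, every principal curvature lies in a compact subinterval of $(0, \infty)$.

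Next I would switch to the Gauss map parametrization, as in \cite{AMZ,BW1}. Strict convexity combined with pinched curvatures makes the Gauss map a diffeomorphism with bi-Lipschitz constant controlled by the pinching. The support function $s(z, t)$ on the corresponding open subset of $\mathbb{S}^n$ then satisfies
\begin{equation*}
\frac{\partial s}{\partial t} = -\bigl[F\bigl((\bar\nabla^2 s + s I)^{-1}\bigr)\bigr]^\beta,
\end{equation*}
where $\bar\nabla$ is the spherical connection and $\bar\nabla^2 s + sI$ is the matrix of principal radii of curvature, whose eigenvalues are the $\lambda_i^{-1}$. The right-hand side equals $-f_*^{-\beta}$ applied to those radii; since $f_*$ is concave, positive and homogeneous of degree one by Condition \ref{con1}(v), $f_*^{-\beta}$ is convex in the radii, so $-f_*^{-\beta}$ is concave in $\bar\nabla^2 s$. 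The curvature pinching from the first step renders this equation uniformly parabolic, with the $C^2$ norm of $s$ under control.

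At this point the interior parabolic Evans–Krylov theorem applies and yields a $C^{2,\alpha}$ estimate for $s$ on any compactly contained subdomain. Differentiating the equation produces linear, uniformly parabolic equations for the spatial derivatives of $s$, with Hölder coefficients inherited from the previous step; parabolic Schauder estimates then iterate to give $C^{k,\alpha}$ bounds for every $k$. Pulling back through the Gauss map converts these into the desired bounds on $\nabla^k A$ on $\{u \leq \sigma R\}$. The main technical difficulty is organizing the localization, since the cut-off in the $u$-variable from the earlier sections must be converted into a spherical cut-off for the Evans–Krylov and Schauder machinery; one introduces a nested sequence $\sigma = \sigma_0 < \sigma_1 < \dots < \sigma_N < 1$ of levels and performs pinching on the outermost shell, Evans–Krylov on the next, and each Schauder step on a slightly smaller one, with the Gauss-map image of each level compactly contained in that of the previous. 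This is routine once the uniform parabolicity and concavity are in place, and it is in this bookkeeping that the dependence on the data in the statement is consumed.
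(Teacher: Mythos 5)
Your proposal is correct and follows essentially the same route as the paper: use Propositions \ref{prop-1}--\ref{prop-3} together with Condition \ref{con1}(vi) to pinch the principal curvatures, pass to the Gauss-map parametrization where the support function solves $\partial_t S = -F_*^{-\beta}(\bar\nabla^2 S + S\,\sigma)$, observe that concavity of $f_*$ and $\beta\geq 1$ make the operator concave in $D^2S$, and then invoke interior $C^{2,\alpha}$ regularity (the paper cites Krylov--Safonov and Tian--Wang where you cite Evans--Krylov, but this is the same toolkit) followed by a Schauder bootstrap. The nested-levels bookkeeping you describe at the end is exactly what the paper leaves implicit.
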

\begin{proof}
By Proposition \ref{prop-2}, we have principal curvatures are local bounded from below. And the local upper bounds for velocity $F$ can be obtained by Proposition \ref{prop-3}, which implies the dual function $F_*$ is bounded from below by a positive constant. Since $f_*$ approaches zero on the boundary of positive cone $\Gamma_+$, there exits a positive constant $c$ such that $\frac{1}{\lambda_i}\geq c$ for all $i$, that is, all the principal curvatures are local bounded from above.
Notice that $\bar\Sigma_t=\{\Sigma_t: u(x, t)\leq \sigma R\}$ is a closed convex hypersurface. Its support function is given by
$$S(z, t)=\sup\{\langle x, z\rangle: x\in\hat\Sigma_t, z\in \mathbb S^n\},$$
where $\hat\Sigma_t$ is a convex body enclosed by $\bar\Sigma_t$. Then hypersurface $\bar\Sigma_t$ can be given by the embedding ( ref. \cite{AMZ})
$$X(z, t)=S(z, t)z+ DS(z, t),$$
where $D$ is the gradient with respect to the standard  metric $\sigma_{ij}$ and connection on $\mathbb S^n$. The derivative of this map is given by
\begin{align*}
  \partial_iX=\tau_{ik}\sigma^{kl}\partial_lz,
\end{align*}
where $\tau_{ij}$ has the form
$$\tau_{ij}= D_i D_jS+\sigma_{ij}S.$$
In particular the eigenvalues of $\tau_{ij}$ with respect to the
metric $\sigma_{ij}$ are the inverses of the principal curvatures, or the principal radii of curvature.

Therefore the solution of (\ref{1-1}) is given, up to a time dependent tangential diffeomorphism, by solving the following scalar parabolic equation on $\mathbb S^n$
$$\partial_t S=-F^{-\beta}_*(\tau_{ij})\triangleq G( D^2S, DS, S, z, t),$$
for the support function $S$.
By the local bounds for principal curvatures, we already have local $C^2$ estimates on the support function $S$ and above formula is uniformly parabolic. Straightforward computations give
$$\dot G^{ij}=\frac{\partial G}{\partial(D^2_{ij}S)}=\beta F_*^{-\beta-1}\dot F_*^{pq}\frac{\partial\tau_{pq}}{\partial(D^2_{ij}S)}=\beta F_*^{-\beta-1}\dot F_*^{ij},$$
and
\begin{align}\label{E-1}
  \ddot G^{ij,kl}=-\beta(\beta+1)F_*^{-\beta-2}\dot F_*^{ij}\dot F_*^{kl}+\beta F_*^{-\beta-1}\ddot F_*^{ij,kl}.
\end{align}
By the concavity of $F_*$ and (\ref{E-1}), we have operator $G$ is concave with respect to $ D^2S$. From the local $C^2$ estimates on $S$ in space-time, we can apply the H\"older estimates of \cite{KS,TW} to obtain the $C^{2,\alpha}$ estimate on $S$ and $C^\alpha$ estimate on $\partial_tS$ in space-time. Therefore, by standard parabolic theory, we have all derivatives of the second fundamental form are bounded.
\end{proof}

\section{Existence of complete noncompact solution}\label{sce4}
Based on the local estimates in Section \ref{sec3}, in this section, we will establish the existence of the complete noncompact solution of \eqref{1-1} and the long time existence of solution for special inverse concave curvature function $F=K^{s/n}G^{1-s}$ for any $s\in(0, 1]$.

Given a locally uniformly convex hypersurface $\Sigma_0$, which is a graph of function $w$ defined on a convex open domain $\Omega\subset\mathbb R^n$, the following proposition shows the existence of complete noncompact solution $\Sigma_t$ of \eqref{1-1} on time interval $[0, T)$,  where $T$ depends on the given domain $\Omega$.
\begin{proposition}\label{prop-5}
Let $\Sigma_0$ be a complete noncompact and locally uniformly convex hypersurface embedded in $\mathbb R^{n+1}$. Suppose $X_0: M^n\rightarrow\mathbb R^{n+1}$ is an immersion such that $X_0(M)=\Sigma_0$. If $B_r(x_0)\subset\Omega_0$ for some $r>0$ and $x_0$, then there exists a solution $X(x, t): M\times(0, T)\rightarrow\mathbb R^{n+1}$ of \eqref{1-1} for some $T\geq (\beta+1)^{-1}r^{\beta+1}$ such that,
for each $t\in(0, T)$, the image $\Sigma_t=X(M, t)$ is a strictly convex smooth complete graph of
 function $w(\cdot, t): \Omega_t\rightarrow\mathbb R$ defined on a convex open $\Omega_t\subset\Omega_0$, and also $w(\cdot, t)$ and $\Omega_t$ satisfy
the conditions of $w_0$ and $\Omega_0$ determined by $\Sigma_0$ in Theorem \ref{thm2}.
\end{proposition}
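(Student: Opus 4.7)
The plan is to approximate $\Sigma_0$ by a sequence of closed smooth strictly convex hypersurfaces $\Sigma_0^k$ that exhaust $\Sigma_0$, apply the classical short-time existence theory to each $\Sigma_0^k$, bound the lifespans uniformly from below by $T=(\beta+1)^{-1}r^{\beta+1}$ via a shrinking-sphere avoidance argument, and pass to the limit using the interior estimates of Section \ref{sec3}. Concretely, writing $\Sigma_0=\mathrm{graph}(w_0)$ via Theorem \ref{thm2}, choose a nested exhaustion $\Omega_0^k\Subset\Omega_0$ with $\bigcup_k\Omega_0^k=\Omega_0$, and for each $k$ construct a closed smooth strictly convex hypersurface $\Sigma_0^k$ whose upper portion coincides with a small $C^2$ mollification of $\{(x,w_0(x)):x\in\Omega_0^k\}$ and whose lower cap closes it off smoothly and convexly. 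Arrange the construction so that on any fixed compact $K\subset\Omega_0$, the quantities $\sup v$, $\sup F$ and $\sup\lambda_{\min}^{-1}$ on the portion of $\Sigma_0^k$ lying over $K$ are bounded uniformly in $k$. Standard parabolic theory for contraction flows of closed strictly convex hypersurfaces then yields, for each $k$, a smooth strictly convex solution $\Sigma_t^k$ of \eqref{1-1} on a maximal interval $[0,T_k)$, with $\Sigma_t^k$ contracting to a point as $t\to T_k$.

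For the uniform lifespan lower bound, the hypothesis $B_r(x_0)\subset\Omega_0$ together with local boundedness of $w_0$ implies that the closed ball $\bar B_r((x_0,M))\subset\mathbb R^{n+1}$ is enclosed by $\Sigma_0$ for any $M>\sup_{B_r(x_0)}w_0+r$, and the approximation can be arranged so this ball is enclosed by $\Sigma_0^k$ for all large $k$. A round sphere of radius $\rho$ has $F=1/\rho$ by the normalization $f(1,\ldots,1)=1$, so under \eqref{1-1} its radius satisfies $\dot\rho=-\rho^{-\beta}$, giving $\rho(t)=(r^{\beta+1}-(\beta+1)t)^{1/(\beta+1)}$, which remains positive on $[0,T)$. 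The standard avoidance principle for closed solutions to \eqref{1-1} then forces $T_k\geq T$. Now fix any $R>0$, any $\sigma\in(0,1)$, and any $[\tau,T']\Subset(0,T)$. By the uniform initial bounds, Proposition \ref{prop-1} yields a uniform gradient estimate, Proposition \ref{prop-2} a uniform local lower bound on $\lambda_{\min}$, Proposition \ref{prop-3} a uniform local upper bound on the velocity $F$, and combining the latter two with Condition \ref{con1}(vi) (equivalently, $f_*\to 0$ on $\partial\Gamma_+$) yields a uniform local upper bound on $\lambda_{\max}$. Proposition \ref{prop-4} then supplies uniform $C^\infty$ bounds on $\Sigma_t^k$ restricted to $\{u\leq\sigma R,\,t\in[\tau,T']\}$, independent of $k$.

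An Arzel\`a--Ascoli and diagonal extraction argument then produces a subsequence converging in $C^\infty_{\mathrm{loc}}$ on $M\times(0,T)$ to a smooth strictly convex complete solution $\Sigma_t$ of \eqref{1-1}. The barrier forces each $\Sigma_t^k$ to enclose $\bar B_{\rho(t)}((x_0,M))$, so the limit is noncollapsed and complete. To verify that $\Sigma_t=\mathrm{graph}(w(\cdot,t))$ with $\Omega_t\subset\Omega_0$ satisfying the conditions of Theorem \ref{thm2}: each $\Sigma_t^k$ is a closed strictly convex hypersurface, so its upper portion (where $\nu_{n+1}<0$) is a graph, and the limit inherits both the graph property and strict convexity; the inclusion $\Omega_t\subset\Omega_0$ follows from $\partial_t u=\Phi v^{-1}>0$ together with preservation of convexity, which makes horizontal projections of $\Sigma_t^k$ monotonically shrink. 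Blow-up of $w(\cdot,t)$ on $\partial\Omega_t$ (and growth at infinity when $\Omega_t$ is unbounded) then follow from local uniform convexity of $\Sigma_t$ together with Theorem \ref{thm2} applied at each time. The main obstacle is Step 1: building an approximating sequence whose initial geometric quantities $\sup v$, $\sup F$ and $\sup\lambda_{\min}^{-1}$ on compact subsets of $\Omega_0$ are controlled uniformly in $k$ without spoiling strict convexity or the barrier inclusion; once this is done, the remainder is compactness plus passage to the limit based on Propositions \ref{prop-1}--\ref{prop-4}.
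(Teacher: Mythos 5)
Your overall strategy matches the paper's: approximate $\Sigma_0$ by closed strictly convex hypersurfaces, invoke short-time existence for closed hypersurfaces, push the lifespans up with a shrinking-sphere barrier, localize using Propositions~\ref{prop-1}--\ref{prop-4}, and pass to the limit. The essential difference, and the place where a real gap remains, is Step~1, and you flag it yourself.

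The paper does not build a ``lower cap'' in the abstract way you describe. It uses a reflection trick: for each $i$, shift the graph up by $2/i$, truncate it at height $i$, reflect across the hyperplane $\mathbb R^n\times\{i\}$ to obtain a closed hypersurface $\bar\Gamma^i_0$ that is uniformly convex, then replace $\bar\Gamma^i_0$ by its $1/i$-envelope to get a $C^{1,1}$ closed hypersurface $\Gamma^i_0$, mollified further via the support function if needed. This construction buys several things at once that your sketch has to re-derive or leaves unresolved: (i) the reflected surface is automatically convex, with no gluing analysis needed; (ii) by uniqueness, the evolving surface $\Gamma^i_t$ stays symmetric about $\mathbb R^n\times\{i\}$, so its lower half is a graph without any separate argument; (iii) because the shift $2/i$ is decreasing, one has the monotone nesting $\Gamma^i_0\preceq\Gamma^{i+1}_0\preceq\Sigma_0$, which is preserved by the comparison principle and gives $w_0\le w^{i+1}\le w^i$, so that the limit $w=\lim_i w^i$ exists pointwise and $\Sigma_t=\partial\bigl(\bigcup_i\mathrm{Conv}(\Gamma^i_t)\bigr)$ is a well-defined complete convex hypersurface. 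In your proposal there is no nesting and no symmetry, so the limit extraction has to be done by diagonalization on compact sets with no a~priori monotonicity, and the enclosure of the barrier ball by each approximant has to be arranged by hand.

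Concretely: the statement ``Arrange the construction so that on any fixed compact $K\subset\Omega_0$, the quantities $\sup v$, $\sup F$ and $\sup\lambda_{\min}^{-1}$ on the portion of $\Sigma_0^k$ lying over $K$ are bounded uniformly in $k$'' is precisely what the reflected-and-enveloped construction delivers for free, because over any fixed height range the surface is eventually identical to a slight vertical shift of $\Sigma_0$ and is therefore uniformly controlled. Your ``mollify the top and glue on a convex cap below'' plan does not obviously have this property unless the mollification is trivial away from the cap, and you give no recipe that guarantees strict convexity across the gluing region while keeping the barrier ball inside. Since you explicitly call this ``the main obstacle'' and do not resolve it, the proposal is a correct outline of the compactness-and-estimates machinery but does not constitute a proof; the paper's reflection trick is the missing idea that closes the gap.

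One smaller point: your justification of $\Omega_t\subset\Omega_0$ via $\partial_t u=\Phi v^{-1}>0$ is not quite the right argument. The cleaner route, which is the one the paper's nesting gives directly, is that the comparison principle forces $\Sigma_t\preceq\Sigma_0$, so $\Sigma_t\subset\mathrm{Conv}(\Sigma_0)=\{(x,h):x\in\Omega_0,\ h\ge w_0(x)\}$, and hence any $x$ in the domain of $w(\cdot,t)$ already lies in $\Omega_0$.
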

\begin{proof}
The proof follows similar arguments as that of the proof of Theorem 5.1 in \cite{CDKL}. For
the convenience of readers, we sketch  the proof.

We first construct approximating sequence $\Gamma^i_t$. Let $w_0$ and $\Omega_0$ be determined by $\Sigma_0$ in Theorem \ref{thm2}, and assume $\mathop{\inf}\limits_{\Omega_0}w_0=0$.
For each $i\in\mathbb N$, let us define the approximate function
$$\tilde w^i_0(x)=w_0(x)+2/i$$
 with corresponding graph $\tilde \Sigma^i_0=\{(x, \tilde w^i_0(x)): x\in\Omega_0\}$.
Let  $\bar\Gamma^i_0$  denote the reflection of $\tilde\Sigma^i_0\cap(\mathbb R^n\times[0, i])$ over the $i$-level hyperplane, that is,
\begin{align*}
  \bar\Gamma^i_0=\{(x, h)\in\mathbb R^{n+1}: h\in\left(\tilde w^i_0(x), 2i-\tilde w^i_0(x)\right),\quad x\in\Omega_0, \tilde w^i_0(x)\leq i\}.
\end{align*}
It follows from the locally uniform convexity of $\Sigma_0$ that $\bar\Gamma^i_0$ is a uniformly convex closed hypersurface. Since $\bar\Gamma^i_0$ fails to be smooth at its intersection with the hyperplane $\mathbb R^n\times\{i\}$, we approximate $\bar\Gamma^i_0$ by its $\frac{1}{i}$-envelope $\Gamma^i_0$, which is a uniformly convex closed hypersurface
of class $C^{1,1}$. By Theorem 5 in \cite{AMZ} and approximation arguments, we can obtain that there exists a unique closed convex solution $\Gamma^i_t$ of \eqref{1-1} with initial data $\Gamma^i_0$ defined
for $t\in(0, T_i)$, where $T_i$ is the maximal time of existence. In addition,
the symmetry of  $\Gamma^i_t$ with respect to the hyperplane $\mathbb R^n\times\{i\}$ can be obtained by the uniqueness of solution. Thus its lower half $\Sigma^i_t=\Gamma^i_t\cap(\mathbb R^n\times[0, i])$ is a graph of some function $w^i(\cdot, t)$ defined on a convex set
$\Omega^i_t$, that is,
\begin{align*}
  \Sigma^i_t=\Gamma^i_t\cap(\mathbb R^n\times[0, i])=\{(x, w^i(x, t)): x\in\Omega_t^i\}.
\end{align*}
Let
\begin{align*}
  \Sigma_t=\partial\{\mathop{\cup}\limits_{i\in\mathbb N}{\rm Conv}(\Gamma^i_t)\},\quad\quad\Omega_t=\mathop{\cup}\limits_{i\in\mathbb N}\Omega^i_t,\quad t\in[0, T)
\end{align*}
where $T=\mathop{\sup}\limits_{i\in\mathbb N}T_i.$

By Proposition 6.3 in \cite{CD},  we can regularize $\Gamma^i_0$ by convolving its support  function with some compactly supported mollifiers on $\mathbb S^n$. Exactly as in \cite{CDKL}, we can prove that the interior estimates in Section \ref{sec3} hold in $\Gamma^i_t$ for cut-off function $\varphi_\gamma=(R-u(x, t)-\gamma t)_+$ with $R<i$. Therefore, $\Sigma^i_t$ is a strictly convex smooth graph of function $w^i(\cdot, t)$ for $t\in(0, T_i]$.

On the other hand, by the definition of $\Gamma^i_0$, we have $\Gamma^i_0\preceq\Gamma^{i+1}_0$. The comparison principle gives that
 $$\Gamma^i_t\preceq\Gamma^{i+1}_t\preceq\Sigma_0,$$
  which implies
\begin{align}\label{4-1}
  w_0(x)\leq w^{i+1}(x, t)\leq w^i(x, t).
\end{align}
 Thus $\mathop{\cup}\limits_{i\in\mathbb N}{\rm Conv}(\Gamma^i_t)$ is a convex body and $\Sigma_t=\partial(\mathop{\cup}\limits_{i\in\mathbb N}{\rm Conv}(\Gamma^i_t))$ is a complete convex hypersurface embedded in $\mathbb R^{n+1}$. From inequality \eqref{4-1}, we have
\begin{align*}
  w(x, t)=\lim_{i\rightarrow \infty}w^i(x, t), \quad\quad t\in(0, T).
\end{align*}
By the same manner as in the proof of Theorem 5.1 in \cite{CDKL}, we derive that $\Sigma_t$ is a complete noncompact strictly convex smooth graph solution. At last, the lower bound on the existence time $T$ can be achieved by the comparison principle.
\end{proof}

\if fase
\textbf{Step 1: The construction of the approximating sequence $\Gamma^i_t$.} Let $w_0$ and $\Omega_0$ be determined by $\Sigma_0$ as in Theorem \ref{thm2}, and assume $\mathop{\inf}\limits_{\Omega}w_0=0$.
For each $i\in\mathbb N$, let us define the approximate function $\tilde w^i_0(x)=w_0(x)+2/i$ with corresponding graph $\tilde \Sigma^i_0=\{(x, \tilde w^i_0(x)): x\in\Omega_0\}$.
Let  $\bar\Gamma^i_0$  denote the reflection of $\tilde\Sigma^i_0\cap(\mathbb R^n\times[0, i])$ over the $i$-level hyperplane, i.e.
\begin{align*}
  \bar\Gamma^i_0=\{(x, h)\in\mathbb R^{n+1}: h\in\left(\tilde w^i_0(x), 2i-\tilde w^i_0(x)\right),\quad x\in\Omega_0, \tilde w^i_0(x)\leq i\}.
\end{align*}
It follows from the locally uniform convexity of $\Sigma_0$ that $\bar\Gamma^i_0$ is uniformly convex closed hypersurface. Since $\bar\Gamma^i_0$ fails to be smooth at its intersection with hyperplane $\mathbb R^n\times\{i\}$, we approximate $\bar\Gamma^i_0$ by its $\frac{1}{i}$-envelope $\Gamma^i_0$, which is a uniformly convex closed hypersurface
of class $C^{1,1}$. By Theorem 5 in \cite{AMZ} and approximation arguments, we can obtain that there exists a unique closed convex solution $\Gamma^i_t$ of \eqref{1-1} with initial data $\Gamma^i_0$ defined
for $t\in(0, T_i)$, where $T_i$ is the maximal time of existence. In addition,
the symmetry of  $\Gamma^i_t$ with respect to the hyperplane $\mathbb R^n\times\{i\}$ can be obtained by the uniqueness of solution. Thus its lower half $\Sigma^i_t=\Gamma^i_t\cap(\mathbb R^n\times[0, i])$ is a graph for some function $w^i(\cdot, t)$ defined on a convex set
$\Omega^i_t$, that is,
\begin{align*}
  \Sigma^i_t=\Gamma^i_t\cap(\mathbb R^n\times[0, i])=\{(x, w^i(x, t)): x\in\Omega_t^i\}.
\end{align*}
Let
\begin{align*}
  \Sigma_t=\partial\{\mathop{\cup}\limits_{i\in\mathbb N}{\rm Conv}(\Gamma^i_t)\},\quad\quad\Omega_t=\mathop{\cup}\limits_{i\in\mathbb N}\Omega^i_t,\quad t\in[0, T)
\end{align*}
where $T=\mathop{\sup}\limits_{i\in\mathbb N}T_i.$

\textbf{Step 2: Local estimates for $\Gamma^i_t$.}
We will show the estimates in Section \ref{sec3} also hold in $\Gamma^i_t$ for cut-off function $\varphi_\gamma=(R-u(x, t)-\gamma t)_+$ with $R<i$.

Regarding  $(0, i)$ as a origin, then the support function $s^i_0(\nu)=\mathop{\max}\limits_{Y\in\Gamma^i_0}\langle\nu, Y\rangle$ of $\Gamma^i_0$ is positive and symmetric. Then for small $\varepsilon$, the convolution $s^{i,\varepsilon}_0=s^i_0*\eta_{\varepsilon}$ for some compactly supported mollifiers $\eta_{\varepsilon}$ on $\mathbb S^n$ is the support function of a strictly convex closed smooth hypersurface $\Gamma^{i,\varepsilon}_0$ by Proposition 6.3 in \cite{CD}. Let $\Gamma^{i,\varepsilon}_t$ be
the unique strictly convex closed smooth solution of \eqref{1-1} with initial data $\Gamma^{i, \varepsilon}_0$ defined for $t\in[0, T^{i,\varepsilon}_0)$~(see Theorem 1.1 in \cite{LL}), which guarantee $\Gamma^{i,\varepsilon}_t$ is also symmetric with
respect to $\mathbb R^n\times\{i\}$. Therefore, the lower half of $\Gamma^{i,\varepsilon}_t$ remains as a graph for some function $w^{i, \varepsilon}(\cdot, t)$.

What's more, by  Proposition 6.3 in \cite{CD}, $\Gamma^{i,\varepsilon}_0$ converges to $\Gamma^i_0$ and the
following inequality holds
\begin{align*}
  \mathop{\lim\inf}\limits_{\varepsilon\rightarrow 0}\lambda_{\min}(\Gamma^{i,\varepsilon}_0)(X_\varepsilon)\geq\lambda_{\min}^{\rm loc}(\Gamma_0^i)(X),
\end{align*}
where $\{X_{\varepsilon}\}$ is a set of points $X_\varepsilon\in\Gamma^{i,\varepsilon}_0$ converging to $X\in\Gamma^{i}_0$ as $\varepsilon\rightarrow 0$, and
\begin{align*}
  \lambda_{\min}^{\rm loc}(\Gamma_0^i)(X)=\mathop{\lim\inf}\limits_{s\rightarrow 0}\{\lambda_{\min}(\Gamma_0^i)(Y): Y\in\Gamma_0^i\cap B_s^{n+1}(X)\}.
\end{align*}
Thus for $R<i$, $\bar\Gamma^{i,\varepsilon}_0=\{\Gamma^{i,\varepsilon}_0: u^{i,\varepsilon}(x,0 )\leq R\}$ and $\bar\Gamma^{i}_0=\{\Gamma^{i}_0: u^{i}(x,0 )\leq R\}$, we have
\begin{align*}
  \mathop{\lim\inf}\limits_{\varepsilon\rightarrow 0}(\inf_{X_\varepsilon\in \bar\Gamma^{i,\varepsilon}_0}\lambda_{\min}(\Gamma^{i,\varepsilon}_0)(X_\varepsilon))\geq\inf_{X\in \bar\Gamma^{i}_0}\lambda_{\min}^{\rm loc}(\Gamma_0^i)(X),
\end{align*}
and
\begin{align*}
  \mathop{\lim\sup}\limits_{\varepsilon\rightarrow 0}(\sup_{X_\varepsilon\in \bar\Gamma^{i,\varepsilon}_0}v(\Gamma^{i,\varepsilon}_0)(X_\varepsilon))\leq\sup_{X\in \bar\Gamma^{i}_0}v(\Gamma_0^i)(X).
\end{align*}
Hence, the uniform gradient estimate and the uniform lower bound for principal curvatures of $\Gamma^{i, \varepsilon}_t$ can be obtained by Proposition \ref{prop-1} and Proposition \ref{prop-2} respectively. Due to dual function $F_*$ approaches zero on the boundary of positive cone,
 all the principal curvatures of $\Gamma^{i,\varepsilon}_t$ are uniformly bonded from above by Proposition  \ref{prop-3}.  Then Proposition \ref{prop-4} imply the
local uniformly estimates for all derivatives of the second fundamental form of $\Gamma^{i,\varepsilon}_t$. Therefore, the limit
$\Gamma^i_t$ is a strictly convex smooth closed graph for $t\in(0, T_i]$.

\textbf{Step 3: Passing $\Gamma^i_t$ to the limit $\Sigma_t$.} By the definition of $\Gamma^i_0$, we have $\Gamma^i_0\preceq\Gamma^{i+1}_0$. The comparison principle implies $\Gamma^i_t\preceq\Gamma^{i+1}_t\preceq\Sigma_0$, which implies $w_0(x)\leq w^{i+1}(x, t)\leq w^i(x, t)$. Thus $\mathop{\cup}\limits_{i\in\mathbb N}{\rm Conv}(\Gamma^i_t)$ is a convex body and $\Sigma_t=\partial(\mathop{\cup}\limits_{i\in\mathbb N}{\rm Conv}(\Gamma^i_t))$ is a complete convex hypersurface embedded in $\mathbb R^{n+1}$.

For any constant $R>0, t_0>0, \sigma\in(0, 1)$ and $i\gg R$, we can derive a uniform gradient bound of $\Gamma^i_t$ for cut-off function $\varphi_\gamma$ with $\gamma=\min\{R, t_0^{-1}\sigma\}$ for $i>R$ and $t\in[0, t_0]$ in $\bar\Gamma^i_t=\{\Gamma^i_t: u^i(x, t)\leq \sigma R\}$ by Proposition \ref{prop-1}. So the gradient function $v(\Sigma_t)$ of $\Sigma_t$ is
bounded in $\bar\Sigma_t=\{\Sigma_t: u(x, t)\leq \sigma R\}$. Thus, $\Sigma_t$ is a complete convex graph of function $w(\cdot, t)$ defined on a convex open set $\Omega_t$.
Since $w_0(x)\leq w(x, t)$ by $w_0(x)\leq w^{i+1}(x, t)\leq w^i(x, t)$, we have $w(x, t)$ and $\Omega_t$ satisfy the same
conditions  of $w_0$ and $\Omega_0$ determined by $\Sigma_0$ in Theorem \ref{thm2}.
Moreover, by Proposition \ref{prop-2}, Proposition \ref{prop-3} and the property of curvature function $F$, the principal curvatures of $\Gamma^i_t$ are uniform bounded from below and above in $\bar\Gamma^i_t=\{\Gamma^i_t: u^i(x, t)\leq \sigma R\}$ for $i\gg R, \sigma\in(0, 1)$ and $t\in [t_1, t_0]$ with $0<t_1<t_0<T$.

\textbf{Step 4: Passing $w^i$ to the limit $w$.} Since $\Sigma^i_t$ is the graph of function $w^i(\cdot, t)$ and $w_0(x)\leq w^{i+1}(x, t)\leq w^i(x, t)$, the following holds
\begin{align*}
  w(x, t)=\lim_{i\rightarrow \infty}w^i(x, t), \quad\quad t\in(0, T).
\end{align*}
Choosing $t_1<t_0\in(0, T)$ and $x_0\in\Omega_{t_0}$. Since $\Omega_{t_0}$ is open, there
exists  $r_0>0$ such that $\overline{B_{r_0}(x_0)}\subset \Omega_{t_0}$ and $r_0^2<t_0-t_1$. In addition $\Omega^i_{t_0}$ monotone converges to $\Omega_{t_0}$ by $\Gamma^i_t\preceq\Gamma^{i+1}_t\preceq\Sigma_0$. Hence, for sufficiently large $i_0$, we have $B_{r_0}(x_0)\subset\Omega^{i_0}_{t_0}$, which implies $u^{i_0}(x, t_0)=w^{i_0}(x, t_0)\leq i_0$. Thus, for all $i\geq i_0, t\in[t_1, t_0]$ and $x\in B_r(x_0)$,  we have
\begin{align*}
  0\leq w_0(x)\leq w^i(x, t)\leq w^i(x, t_0)\leq w^{i_0}(x, t_0)\leq i_0,
\end{align*}
that is,
 $$0\leq u_0(x)\leq u^i(x, t)\leq u^{i_0}(x, t_0)\leq i_0.$$
 Moreover the principal curvatures of $\Sigma_t$ are local uniformly bounded by Proposition \ref{prop-2} and Proposition \ref{prop-3}.
Besides above inequality and Proposition \ref{prop-4} imply the limit $\Sigma_t$ is a complete noncompact strictly convex smooth graph solution.

Last let us prove the maximal times of existence satisfies $T\geq(\beta+1)^{-1}r^{\beta+1}$.
Since $B_r(x_0)\subset\Omega_0$, there exists  constant $h_0>$ such that $B^{n+1}_r(x_0, h_0)\preceq\Sigma_0$. Set $X_0=(x_0, h_0)$. By $\Gamma^i_t\preceq\Gamma^{i+1}_t\preceq\Sigma_0$, we can choose $i\geq r+h_0$ such that $B^{n+1}_r(X_0)\preceq\Gamma^i_0$. On the other hand, $\partial B^{n+1}_{r_i(t)}(X_0)$ is a solution of \eqref{1-1}, where $r_i(t)=(r^{\beta+1}-(\beta+1)t)^{\frac{1}{\beta+1}}$. Then the comparison principle implies $\partial B^{n+1}_{r_i}(X_0)\preceq \Gamma^i_t$. Observe that $\Gamma^i_t$ exists while $r_i>0$, we have $T\geq \frac{1}{\beta+1}r^{\beta+1}$.
\fi

{\renewcommand{\proofname}{\textbf{Proof of Theorem \ref{thm1}}}
\begin{proof}
It follows from the locally uniformly convexity of $\Sigma_0$ that there exists function $w_0$ defined on a convex open domain $\Omega_0\subset\mathbb R^n$ such that $\Sigma_0={\rm graph}~w_0$ by Theorem \ref{thm2}. As a result, there exist point $x_0$ and $r>0$ such that $B_r(x_0)\subset\Omega_0$, and the first part of Theorem \ref{thm1} follows by Proposition \ref{prop-5}.

If initial hypersurface $\Sigma_0$ is an entire graph over $\mathbb R^n$, then for any $r>0$, we have $B_r(x_0)\subset \mathbb R^n$. And the long time existence of solution follows by the lower bound on the existence time $T\geq(\beta+1)^{-1}r^{\beta+1}$ in Proposition \ref{prop-5}.
\end{proof}}

To prove the long time existence of complete noncompact solution of \eqref{1-1} for special inverse concave curvature function $F=K^{s/n}G^{1-s}$ in Theorem \ref{thm4}, we will construct an appropriate barrier (see Theorem 5.4 in \cite{CDKL}) to guarantee that each $\Sigma_t$ remains as a graph over the same domain $\Omega$ for all $t\in(0, T)$, which implies $T=\infty$ independently from the domain $\Omega$.
\begin{theorem}\label{thm3}
  Suppose curvature function $G$ satisfies Condition \ref{con1} and $F=K^{s/n}G^{1-s}$ for any $s\in(0, 1]$. Let $\Sigma_0$ be a complete noncompact and locally uniformly convex hypersurface embedded in $\mathbb R^{n+1}$. Assume that $\Sigma_t=\{(x, w(x, t)): x\in\Omega_t, t\in(0, T)\}$ is a strictly convex smooth complete graph solution of \eqref{1-1} such that $\Omega_t, w(\cdot, t)$ and $\Sigma_t$ satisfy the conditions of $\Omega_0$ and $w_0$ determined by $\Sigma_0$ in Theorem \ref{thm2}. Then for any closed ball $\overline {B_{R_0}(x_0)}\subset \Omega_0$ and any $t_0\in(0, T)$, we have $B_{R_0}(x_0)\subset \Omega_{t_0}$.
\end{theorem}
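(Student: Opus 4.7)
The plan is to build an upper barrier $W_t$ which is itself a complete noncompact graph over the fixed ball $B_{R_0}(x_0)$ for all $t\in[0,T)$, and then use the comparison principle to trap $\Sigma_t$ beneath $W_t$. The special structure $F=K^{s/n}G^{1-s}$ enters exactly in ensuring that the domain of $W_t$ does not shrink.

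\textbf{Step 1. Construction of the initial barrier.} Since $\overline{B_{R_0}(x_0)}\subset\Omega_0$ is compact, $M_0:=\sup_{\overline{B_{R_0}(x_0)}}w_0<\infty$. I construct a rotationally symmetric (about the vertical axis through $(x_0,0)$), strictly convex, complete noncompact hypersurface
\[
W_0=\{(x,\tilde w_0(|x-x_0|)):x\in B_{R_0}(x_0)\},
\]
with $\tilde w_0\geq w_0+1$ on $\overline{B_{R_0}(x_0)}$ and $\tilde w_0(r)\to\infty$ as $r\to R_0^{-}$; for instance a smoothed version of $M_0+1+c(R_0^2-r^2)^{-1}$. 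By construction $W_0$ satisfies the hypotheses of Theorem \ref{thm2}.

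\textbf{Step 2. Evolving the barrier.} Applying Proposition \ref{prop-5} to $W_0$ produces a complete noncompact strictly convex smooth graph solution $W_t=\{(x,\tilde w(x,t)):x\in\tilde\Omega_t\}$ of \eqref{1-1} on a maximal interval $[0,\tilde T)$. The uniqueness used in Proposition \ref{prop-5} together with the rotational symmetry of $W_0$ forces $W_t$ to remain rotationally symmetric, so $\tilde\Omega_t=B_{\tilde R(t)}(x_0)$ for a non-increasing function $\tilde R(t)\leq R_0$.

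\textbf{Step 3. The barrier domain does not shrink.} This is the main step, and where $F=K^{s/n}G^{1-s}$ is essential. For a rotationally symmetric graph the principal curvatures are $\kappa_1=\tilde w_{rr}/(1+\tilde w_r^{2})^{3/2}$ with multiplicity $1$ and $\kappa_2=\tilde w_r/(r\sqrt{1+\tilde w_r^{2}})$ with multiplicity $n-1$. As $r\uparrow\tilde R(t)$, $\tilde w_r\to\infty$, so $\kappa_2\to 1/r$ is bounded while $\kappa_1$ blows up. The presence of the factor $K^{s/n}=(\kappa_1\kappa_2^{n-1})^{s/n}$ in $F$ together with the monotonicity and homogeneity of $G$ then forces
\[
F^{\beta}=K^{s\beta/n}G^{(1-s)\beta}\geq c\,\kappa_1^{s\beta/n},
\]
near the lateral boundary of $\tilde\Omega_t$. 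Rewriting the equation for $\tilde R(t)$ via the normal speed and the geometry of the graph at the rim (so that $\tilde R'(t)$ is proportional, up to the outward tilt of the tangent plane, to $F^{\beta}$ times appropriate factors that vanish), one arrives at an ODE differential inequality for $\tilde R$ that forbids $\tilde R$ from decreasing. Equivalently, I would compare $W_t$ with a rotationally symmetric subsolution (a translated bowl whose rim sits at $\partial B_{R_0}(x_0)$) constructed from an explicit self-similar ansatz, and deduce $\tilde R(t)=R_0$ on $[0,\tilde T)$.

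\textbf{Step 4. Comparison and conclusion.} Since $W_0>\Sigma_0$ over $\overline{B_{R_0}(x_0)}$ and both $\Sigma_t$, $W_t$ are smooth strictly convex graphs, the weak comparison principle for \eqref{1-1} (applied to the local gradient/curvature estimates of Section \ref{sec3} to rule out contact at infinity) gives $w(\cdot,t)\leq\tilde w(\cdot,t)$ on $B_{R_0}(x_0)$ for every $t\in[0,\min\{T,\tilde T\})$. In particular $w(x,t)<\infty$ for all $x\in B_{R_0}(x_0)$, so $B_{R_0}(x_0)\subset\Omega_t$. Finally, a continuity/bootstrap argument using the interior estimates of Section \ref{sec3} and the bound $\tilde R\equiv R_0$ extends the barrier $W_t$ past any $\tilde T<T$, so we may take $\tilde T\geq T$; this yields the assertion for every $t_0\in(0,T)$.

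\textbf{Main obstacle.} The hard part is Step 3: showing that $\tilde R(t)\equiv R_0$. Without the Gauss-curvature factor $K^{s/n}$, $F^{\beta}$ could stay bounded even as $\kappa_1\to\infty$ at the rim, and the barrier's domain might pinch. The role of $s\in(0,1]$ is precisely to force $F^{\beta}\to\infty$ fast enough along the rim to overpower any tendency of $\tilde R$ to shrink, and this requires a careful ODE comparison (or explicit self-similar subsolution) rather than a purely soft argument.
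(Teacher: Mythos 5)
Your high-level strategy (a rotationally symmetric barrier that stays a graph over $B_{R_0}(x_0)$, then comparison) is in the right spirit, but Step 3 -- which you yourself identify as the crux and leave unproved -- is built on a false premise and is also essentially circular. The claim ``as $r\uparrow\tilde R(t)$, $\kappa_1$ blows up and hence $K\to\infty$'' is wrong. For a complete strictly convex graph over a ball with $\tilde w(r)\to\infty$ at the rim, the profile curve $h\mapsto(\phi(h),h)$ has a vertical asymptote, its tangent direction converges, and hence the total turning $\int\kappa_1\,ds$ is finite; this rules out $\kappa_1\to\infty$ and, for any reasonable profile (including the paper's explicit one), gives $\kappa_1\to 0$. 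Since $\kappa_2\to 1/r$, one gets $K=\kappa_1\kappa_2^{n-1}\to 0$ at the rim, not $\infty$. So the mechanism is exactly opposite to what you describe: the factor $K^{s/n}$ makes the speed $F^\beta$ \emph{degenerate} near the rim, which is what prevents the domain from shrinking; the lower bound $F^\beta\geq c\kappa_1^{s\beta/n}$ you invoke points in the wrong direction and, with $\kappa_1\to 0$, gives nothing. Moreover, proving $\tilde R(t)\equiv R_0$ for your evolved barrier $W_t$ is precisely the statement of the theorem applied to $W_0$, so Step 3 as stated is circular; and the bootstrap in Step 4 past $\tilde T$ again presupposes the conclusion. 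Finally, the comparison of two complete noncompact solutions requires excluding contact at infinity, which you do not address.

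The paper avoids all of these issues by \emph{not} flowing the barrier: it writes down an explicit, spatially compact supersolution $\Psi^{\delta,l}_t$ (a paraboloid-of-revolution ``collar'' between heights $l-1$ and $l$, with radial profile $\phi^{\delta,l}(h,t)=R_0-\delta(h-l)^2-Ct$), verifies by direct computation that $K\leq C(n,\sigma,R_0)\,\delta$ and $H\leq C(n,\sigma,R_0)$ on it, uses $G\leq H$ (valid for any normalized monotone degree-one $G$) to get $F=K^{s/n}G^{1-s}\leq C\delta^{s/n}$, and checks that the prescribed shrinkage rate $C\delta^{s\beta/n}$ dominates $F^\beta v$, so $\Psi^{\delta,l}_t$ is a supersolution. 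It then compares on a compact region (sidestepping infinity) and sends $\delta\to 0$ to conclude $B_{R_0}(x_0)\subset\Omega_{t_0}$. The whole point is that $K$ is made \emph{small} on the barrier, which is exactly what your heuristic misses. To repair your argument you would need to replace Step 3 entirely with an explicit supersolution computation of this type.
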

\begin{proof}
Without loss of generality, we may assume that $x_0=0$ and $R_0<1$. From $\overline{B_{R_0}(0)}\subset\Omega_0$,
it follows that there exists a constant $l_0\geq 0$ such that $\overline{B_{R_0}(0)}\preceq L_{l_0}(\Sigma_0)=B_{l_0}(0)$. For given constants $l\geq l_0+1$, $\sigma\in(0, 1)$ and $\delta>0$ sufficiently small such that
$$\delta+2^{(1-s+s/n)\beta+2}(1-\sigma)^{-\beta}n^{(1-s)\beta}R_0^{-\beta}\delta^{\frac{s\beta}{n}}t_0\leq\sigma R_0,$$
let us define the function $\phi^{\delta, l}: [l-1, l]\times[0, t_0]\rightarrow \mathbb R$ by
\begin{align*}
  \phi^{\delta, l}(h, t)=R_0-\delta(h-l)^2-2^{(1-s+s/n)\beta+2}(1-\sigma)^{-\beta}n^{(1-s)\beta}R_0^{-\beta}\delta^{\frac{s\beta}{n}}t.
\end{align*}
We denote the inverse function of  $\phi(\cdot, t)$ by $\phi^{-1}(\cdot, t)$, and denote the graph of the rotationally
symmetric function $\phi^{-1}(|x|, t)$ by
\begin{align*}
  \Psi^{\delta, l}_t=\{(x, h): |x|=\phi^{\delta, l}(h, t)\}.
\end{align*}
It is easy to see that $\Psi^{\delta, l}_0\preceq \Sigma_0$. We claim that $\Psi^{\delta, l}_t$ defines a supersolution of \eqref{1-1}.

In fact, let $\phi^{-1}_{r}$ and $\phi^{-1}_{rr}$ denote the first and second derivatives of $\phi^{-1}(r, t)$ with respect to $r$. Then the Gauss curvature $K$ and the mean curvature $H$ of $\Psi^{\delta, l}_t$ satisfy the following inequalities respectively
\begin{align*}
  K&=\frac{\phi^{-1}_{rr}|\phi^{-1}_r|^{n-1}}{r^{n-1}(1+|\phi^{-1}_r|^2)^{\frac{n+2}{2}}}\\
  &\leq \frac{\phi^{-1}_{rr}}{((1-\sigma)R_0)^{n-1}(1+|\phi^{-1}_r|^2)^{\frac{3}{2}}}\\
  &=-\frac{\phi_{hh}}{(1-\sigma)^{n-1}R_0^{n-1}(1+\phi^2_h)^{\frac{3}{2}}}\\
  &\leq2R_0^{-n}(1-\sigma)^{-n}\delta,
\end{align*}
and
\begin{align*}
  H&=\frac{1}{\sqrt{1+(\phi^{-1}_r)^2}}\left((n-1)\frac{\phi^{-1}_r}{r}+\frac{\phi^{-1}_{rr}}{1+(\phi^{-1}_r)^2}\right)\\
  &=\frac{1}{\sqrt{1+\phi_h^2}}\left((n-1)\frac{1}{r}-\frac{\phi_{hh}}{1+\phi_h^2}\right)\\
  &\leq\frac{n-1}{(1-\sigma)R_0}+2\delta\leq \left(\frac{n-1}{1-\sigma}+2\right)R_0^{-1}\\
  &\leq\frac{2n}{(1-\sigma)R_0},
\end{align*}
where $\delta\leq \sigma R_0\leq 1/R_0$ is used in above inequality. 
Therefore, the following inequality holds
\begin{align*}
  F&=K^{s/n}G^{1-s}\leq K^{s/n}H^{1-s}
 \leq\left(\frac{2^{1/n}\delta^{1/n}}{(1-\sigma)R_0}\right)^s\left(\frac{2n}{(1-\sigma)R_0}\right)^{1-s}
  =\frac{2^{1-s+s/n}n^{1-s}}{(1-\sigma)R_0}\delta^{s/n}.
\end{align*}
In addition, the gradient function $v$ of $\phi^{-1}$ on $L_{[l-1, l)}(\Psi^{\delta, l}_t)=B_{\phi^{\delta, l}(l-1, t)}(0)\backslash B_{\phi^{\delta, l}(l, t)}(0)$ can be estimated as follows
\begin{align*}
  v=\sqrt{1+(\phi^{-1}_r)^2}=\sqrt{1+\frac{1}{4\delta^2(h-l)^2}}=\frac{\sqrt{1+4\delta^2(h-l)^2}}{2\delta(l-h)}\leq \frac{2}{\delta(l-h)},
\end{align*}
where $\delta\leq \sigma R_0\leq 1$ is used in the last inequality.
On the other hand, by
$$\phi^{-1}(\phi(h, t), t)=h,$$
we have
\begin{align*}
  \partial_t(\phi^{-1})=-\phi^{-1}_r\partial_t \phi=\frac{2^{(1-s+s/n)\beta+2}(1-\sigma)^{-\beta}n^{(1-s)\beta}R_0^{-\beta}\delta^{\frac{s\beta}{n}}}{2\delta(l-h)}\geq F^\beta v.
\end{align*}
Thus $\Psi^{\delta, l}_t$ is a supersolution of \eqref{1-1}.

Once the supersolution $\Psi^{\delta, l}_t$ is obtained, we can conclude with the arguments as in Theorem 5.4 in \cite{CDKL} that $B_{R_0}(x_0)\subset \Omega_{t_0}$ for any  $t_0\in(0, T)$.
\if fase
Assume there exists contact time $t^{\delta, l}\in(0, t_0]$ such that $\Sigma_t\cap{\rm Conv}(\Psi^{\delta, l}_t)=\varnothing$  for $t\in(0,  t^{\delta, l})$ and $\Sigma_{t^{\delta, l}}\cap{\rm Conv}(\Psi^{\delta, l}_{t^{\delta, l}})\neq\varnothing$, and  let $\Upsilon_{\delta, l}$ denote the contact set, that is,
\begin{align*}
  \Upsilon_{\delta, l}=\Sigma_{t^{\delta, l}}\cap{\rm Conv}(\Psi^{\delta, l}_{t^{\delta, l}})=\Sigma_{t^{\delta, l}}\cap\Psi^{\delta, l}_{t^{\delta, l}}.
\end{align*}
Since $\Sigma_t$ is a graph, it cannot contact $\Psi^{\delta, l}_{t^{\delta, l}}$ on $L_l(\Psi^{\delta, l}_{t^{\delta, l}})=B_{\phi^{\delta, l}(l, t^{\delta, l})}(0)$.
Then from the fact that
$$\Upsilon_{\delta, l}\subset\partial(\Psi^{\delta, l}_{t^{\delta, l}})=L_{l-1}(\Psi^{\delta, l}_{t^{\delta, l}})\cup L_l(\Psi^{\delta, l}_{t^{\delta, l}}),$$
we have $\Upsilon_{\delta, l}\subset L_{l-1}(\Psi^{\delta, l}_{t^{\delta, l}})$.

Suppose $(y_0, t^{\delta, l})\in \Upsilon_{\delta,l}$ is a contact point, then $|y_0|=\phi^{\delta, l}(l-1, t^{\delta, l})<R_0$ and the slope of the graph of $w$ at this point is at most equal to the slope of $\Psi^{\delta, l}_{t^{\delta, l}}$, hence $|Dw|_{(y_0, t^{\delta, l})}\leq \frac{1}{2\delta}$.
Let us define a set
$$A=\left\{(x, t): t\in[0, t_0], x\in\Omega_t, |x|\leq R_0, |D w|\leq \frac{1}{2\delta}\right\}.$$
The compactness of set $A$ implies the function  $w$ attains its maximum in $A$. Set $l_\delta=\max\{w(x, t), (x, t)\in A\}$.
Since $(y_0, t^{\delta, l})\in A$, we must have $l-1=w(y_0, t^{\delta, l})\leq l_\delta$.  Therefore, for any  $l>l_\delta+1$, the following fact holds
\begin{align*}
  \Upsilon_{\delta, l}\cap L_{l-1}(\Psi^{\delta, l}_{t^{\delta, l}})=\varnothing.
\end{align*}
Hence, we have $\Psi^{\delta, l}_{t_0}\prec \Sigma_{t_0}$ and $B_{\phi^{\delta, l}(l, t_0)}(0)=L_l(\Psi^{\delta, l}_{t_0})\prec L_l(\Sigma_{t_0})\preceq\Omega_{t_0}$.
And the result follows by letting $\delta$ to zero.
\fi
\end{proof}

{\renewcommand{\proofname}{\textbf{Proof of Theorem \ref{thm4}}}
\begin{proof}
It follows from Remark \ref{rem1} that $F=K^{s/n}G^{1-s}$ satisfies the Condition \ref{con1} for any $s\in(0, 1]$. Then by Theorem \ref{thm1}, there exists complete noncompact smooth strictly convex solution $\Sigma_t$ to \eqref{1-1}, which remains the graph for $t\in(0, T)$.
By Theorem \ref{thm3}, we have $\Sigma_t$ remains as a graph over the same domain $\Omega_0$ for all $t\in(0, T)$. As a conclusion,  $T=\infty$ and the assertion follows.
\end{proof}}


\end{document}